\documentclass[12pt]{article}

\usepackage[a4paper,margin=1in]{geometry}
\usepackage{amsmath,amssymb,amsfonts,amsthm,mathtools}
\usepackage{mathrsfs}
\usepackage{hyperref}
\usepackage{booktabs,multirow,array,longtable}
\usepackage{graphicx}
\usepackage{tikz,cite}
\usepackage{algorithm}
\usepackage{algpseudocode}
\usepackage{enumitem}
\usepackage{xcolor}
\usetikzlibrary{arrows.meta,positioning,shapes.geometric,calc,fit}
\everymath{\displaystyle}
\renewenvironment{proof}{{\bf \noindent Proof.}}{\qed}

\hypersetup{
	colorlinks=true,
	linkcolor=blue!60!black,
	citecolor=blue!60!black,
	urlcolor=blue!60!black
}

\newtheorem{theorem}{Theorem}[section]
\newtheorem{lemma}[theorem]{Lemma}
\newtheorem{proposition}[theorem]{Proposition}
\newtheorem{corollary}[theorem]{Corollary}
\theoremstyle{definition}

\newtheorem{example}[theorem]{Example}
\newtheorem{remark}[theorem]{Remark}
\newtheorem{problem}[theorem]{Problem}

\newcommand{\F}{\mathbb F}

\title{Vertex connectivity of the nonzero nonunit core of the comaximal graph of $\mathbb Z_n$.}

\author{Bilal Ahmad Rather\\[2mm]
	\small School of Mathematics and Statistics, Shandong University of Technology,\\
	\small Zibo 255049, China\\
	\texttt{\href{mailto:bilalahmadrr@gmail.com}{bilalahmadrr@gmail.com}}
}

\date{}

\begin{document}
	
	\maketitle
	\pagestyle{myheadings}
	\markboth{Bilal Ahmad Rather}{Exact vertex connectivity of $G_2$}
	
	\begin{abstract}
		This article settles Problem 7.2 posed by [Banerjee, Special Matrices (2022)] for the induced subgraph $G_2$ of the comaximal graph $\Gamma(\mathbb Z_n)$ when $n$ is squarefree. Let $n=p_1p_2\cdots p_m$ with distinct primes $p_1<\cdots<p_m$, and let $G_2$ be the graph on the nonzero nonunit residue classes modulo $n$. We use Chinese remainder representation of $\mathbb Z_n$, and encodes each vertex by the set of vanishing coordinates. This converts $G_2$ into a weighted blow-up of a disjointness graph on nonempty proper subsets of $\{1,\dots,m\}$. Within this model, we derive exact class sizes, explicit degree formulas, the minimum-degree layer, and a short-path criterion. The main theorem proves the connectivity of $G_{2}$ as  $\kappa(G_2)=\prod_{i=1}^{m-1}(p_i-1)=\tfrac{\phi(n)}{p_m-1}$. Consequently, earlier upper bound is sharp, $G_2$ is maximally connected, and its edge connectivity agrees with its minimum degree. We also obtain distance formulas, diameter and radius information, and a linear-time algorithm once the prime factorization is known.
	\end{abstract}
	
	\noindent \textbf{MSC 2020:} Primary 05C40; Secondary 05C75, 13A99, 05C69.
	
	\noindent \textbf{Keywords:} comaximal graph, vertex connectivity, squarefree integer ring, Chinese remainder theorem, disjointness graph.
	
	\section{Introduction}\label{sec:intro}
	
	Graphs built from algebraic objects frequently expose structural information that is hard to read directly from the original algebra. Among the best known examples are zero-divisor graphs, unit graphs, annihilating-ideal graphs, and comaximal graphs. In the present setting, the relevant object is the comaximal graph $\Gamma(R)$ of a commutative ring $R$ with identity, introduced by Sharma and Bhatwadekar \cite{SharmaBhatwadekar1995}. Its vertices are the elements of $R$, and two distinct elements are adjacent exactly when they generate the whole ring as an ideal. Even this basic definition already mixes algebra and graph theory in a natural way, ideal generation becomes adjacency, maximal ideals manifest themselves as obstruction sets, and ring decomposition often turns into graph decomposition. The finite residue ring $\mathbb Z_n$ is especially attractive because it is simultaneously elementary, canonical, and rich enough to display nontrivial spectral and connectivity phenomena.
	
	The graph $\Gamma(\mathbb Z_n)$ has been investigated from several viewpoints. Structural aspects of comaximal graphs for general commutative rings were studied by Maimani, Salimi, Sattari, and Yassemi \cite{Maimani2008}, by Moconja and Petrović \cite{MoconjaPetrovic2011}, and by Samei \cite{Samei2014}. These works clarified how algebraic decompositions of the ring govern graph-theoretic properties such as connectedness, domination, and local neighborhoods. On the graph-theoretic side, the study of connectivity and Laplacian invariants belongs to the broader program of understanding how robust a network is under vertex failures, for that background we refer to West \cite{West2001}. The spectral language initiated by Fiedler \cite{Fiedler1973} and surveyed by Merris \cite{Merris1994}, Mohar \cite{Mohar1991}, Brouwer and Haemers \cite{BrouwerHaemers2012}, and de Abreu \cite{deAbreu2007} is especially relevant because algebraic connectivity and vertex connectivity often interact in subtle ways. Other algebraic invariants are given in \cite{bilalcumn,bilaliaaecc,bilaliac,bilalijaa,bilalijpam}.
	
	A decisive step for $\Gamma(\mathbb Z_n)$ was made in \cite{Banerjee2022,afkhami}, they analyzed the Laplacian spectrum and several structural features of the graph. The induced subgraph on the nonzero nonunit elements,
	$$
	G_2=\Gamma(\mathbb Z_n)\big[\{x\in \mathbb Z_n:\ x\neq 0,\ \gcd(x,n)\neq 1\}\big],
	$$
	appears as the genuinely nontrivial core of the full comaximal graph. Indeed, units are universal vertices, while the zero element has a highly controlled neighborhood. An equitable partition of $G_2$ can be obtained based on proper divisor $d_{i}$. Its structure is described through divisor classes (see remark \ref{remark}), several spectral statements are proved, and  an upper bound if established for the vertex connectivity of $G_2$ when $n$ is squarefree \cite{Banerjee2022,afkhami}. The paper \cite{Banerjee2022} then ended with a natural open problem: determine the exact value of $\kappa(G_2)$ for
	$$
	n=p_1p_2\cdots p_m,\qquad 2\leq p_1<p_2<\cdots<p_m,
	$$
	where the $p_i$ are prime. This is Problem 7.2 in \cite{Banerjee2022}.
	
	The importance of this question is both technical and conceptual. Technically, vertex connectivity measures the smallest number of vertices whose deletion disconnects the graph, and hence it is the most basic fault-tolerance parameter. In the arithmetic setting of $\Gamma(\mathbb Z_n)$, a minimum vertex cut identifies the thinnest layer through which the nontrivial comaximal interactions must pass. Conceptually, an exact formula for $\kappa(G_2)$ reveals which prime factors control the fragile directions of the graph. At first sight one might expect the answer to depend on the whole factorization of $n$ in a complicated way, because the adjacency relation on $G_2$ is defined globally through comaximality. One of the main messages of the present paper is that, after the right change of viewpoint, the answer becomes remarkably clean.
	
	Our method is based on the Chinese remainder theorem. When $n$ is squarefree, the ring $\mathbb Z_n$ is isomorphic to the direct product $\F_{p_1}\times\cdots\times \F_{p_m}$, and every residue class can be written as a coordinate vector. For vertices of $G_2$, at least one coordinate is zero and at least one coordinate is nonzero. We therefore encode a vertex by its zero-set, namely the subset of coordinates on which the vector vanishes. This transforms the ring-theoretic adjacency relation into an elementary set-theoretic one: two vertices are adjacent if and only if their zero-sets are disjoint. In other words, $G_2$ becomes a weighted blow-up of a disjointness graph on the nonempty proper subsets of $[m]=\{1,2,\dots,m\}$. Once this model is available, the exact connectivity problem can be attacked by purely combinatorial means. 
	
	The first outcome of this approach is a transparent decomposition of the vertex set into layers $X_S$, indexed by the nonempty proper subsets $S\subsetneq [m]$. Each layer $X_S$ consists of all vertices whose zero-set is $S$, and its cardinality is given explicitly by
	$ |X_S|=\prod_{i\notin S}(p_i-1). $
	The second outcome is an exact degree formula. For a vertex in $X_S$, the degree depends only on $S$, and the minimum degree occurs precisely on the layer whose zero-set omits the largest prime index. The third and decisive step is to show that the layer $X_{{m}}$ is a separator of size $\prod_{i=1}^{m-1}(p_i-1)$ and that removing fewer vertices can never disconnect the graph. This proves the exact formula
	$$
	\kappa(G_2)=\prod_{i=1}^{m-1}(p_i-1)=\frac{\phi(n)}{p_m-1}.
	$$
	Thus the upper bound from \cite{Banerjee2022} is sharp, and the graph $G_2$ turns out to be maximally connected in the classical sense that $\kappa(G_2)=\delta(G_2)$.
	
	Beyond the open problem itself, the support-set model yields several additional consequences. We obtain a complete distance formula between two vertices in terms of their zero-sets, and this leads to a short proof that the diameter is $3$ whenever $m\geq 3$. We also identify the central layers of the graph and provide a simple algorithm for computing $\kappa(G_2)$ once the distinct prime factors of $n$ are known. The algorithmic viewpoint is useful, as it separates two stages of the computation, prime factorization, which is external to the graph problem, and the graph-theoretic evaluation of the connectivity, which is linear in the number of prime factors. In this way the paper not only answers the open problem but also clarifies the mechanism behind the answer.
	
	From a literature perspective, the main novelty of the present work lies in replacing the divisor-based description of \cite{Banerjee2022,afkhami} by a support-set geometry tailored to the squarefree case. That reformulation seems to be absent from the existing comaximal-graph literature. It reveals that $G_2$ behaves like a nonuniform Kneser-type disjointness graph, with weights inherited from the residue sizes of the coordinate fields. This change of model simplifies the proofs, isolates the extremal cut, and naturally suggests further questions for nonsquarefree moduli, edge connectivity, and the classification of all minimum separators.
	
	\medskip
	
	The article is organized as: Section \ref{sec:prelim} fixes notation, records the known results that we use repeatedly, and explains the precise gap left open by previous work (in particular Problem 7.2 in \cite{Banerjee2022}). Section \ref{sec:model} introduces the support-set representation and proves that $G_2$ is a weighted blow-up of a disjointness graph. Section \ref{sec:degree} derives the neighborhood and degree formulas, identifies the unique minimum-degree layer, and prepares the connectivity argument. Section \ref{sec:connectivity} contains the exact solution to Problem 7.2 o f\cite{Banerjee2022}, proves that the bound is sharp, and deduces maximal connectivity. Section \ref{sec:distance} studies metric consequences of the model, including an exact distance formula, the diameter, and short routing statements. Section \ref{sec:algorithmic} gives an explicit algorithm, establishes its correctness and complexity, and collects numerical comparisons and computation tables. Finally, Section \ref{sec:conclusion} summarizes the contribution, discusses limitations, and records directions for further research.
	
	\section{Preliminaries and notation}\label{sec:prelim}
	
	Throughout the paper, all graphs are finite, simple, and undirected. For a graph $G$, the vertex set is denoted by $V(G)$, the neighborhood of a vertex $v$ by $N_G(v)$, the degree of $v$ by $\deg_G(v)$, the minimum degree by $\delta(G)$, the vertex connectivity by $\kappa(G)$, and the edge connectivity by $\lambda(G)$.  
	Two vertices $u\neq v$ are called \emph{false twins}, if they have exactly the same open neighborhood and are \emph{not} adjacent, that is,  $N(u) = N(v)$ and $uv \notin E(G).$	
	We fix a squarefree modulus
	$ n=p_1p_2\cdots p_m,$ with $ 2\leq p_1<p_2<\cdots<p_m, $
	where $m\geq 2$ and the $p_i$ are distinct primes. We write $[m]=\{1,2,\dots,m\}$. Let $\Gamma(\mathbb Z_n)$ be the comaximal graph of $\mathbb Z_n$, and let $G_2$ be the induced subgraph on the nonzero nonunit elements.
	 For a subset $S\subseteq [m]$, we write $ d_S=\prod_{i\in S}p_i, $ 	with the convention that the empty product equals $1$. For a vertex $x\in G_2$, its zero-set will later be denoted by $Z(x)\subseteq [m]$. 
	 For a squarefree modulus $n$, the support classes in $G_2$ will be indexed by nonempty proper subsets of $[m]$. 
	
	The following known facts form the background of our arguments. The next theorem is the arithmetic form of the Chinese remainder theorem used to encode vertices by coordinate zero patterns.	
	\begin{theorem}[Chinese remainder theorem, Chap.~7, \cite{DummitFoote2004}]\label{thm:CRT}
		There is a ring isomorphism
		$$
		\mathbb Z_n\cong \F_{p_1}\times \F_{p_2}\times\cdots\times \F_{p_m}.
		$$
		Under this identification, each element of $\mathbb Z_n$ may be viewed as an $m$-tuple $(x_1,\dots,x_m)$ with $x_i\in \F_{p_i}$.
	\end{theorem}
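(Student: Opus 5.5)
The statement is the Chinese remainder theorem for the squarefree modulus $n=p_1p_2\cdots p_m$. I will construct the map explicitly and show it is a bijective ring homomorphism. Define
$$
\psi:\mathbb Z_n\to \F_{p_1}\times\cdots\times\F_{p_m},\qquad \psi(x \bmod n)=(x\bmod p_1,\dots,x\bmod p_m).
$$

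First, $\psi$ is well defined. If $x\equiv y \pmod n$, then $p_i\mid n\mid x-y$ for every $i$, so the coordinates agree. Because reduction modulo each $p_i$ respects sums, products and the identity, $\psi$ is a unital ring homomorphism.

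Next, $\psi$ is injective. If $\psi(x)=0$, then $p_i\mid x$ for all $i$. The $p_i$ are distinct primes, hence pairwise coprime, so their product $n$ divides $x$ and $x\equiv 0\pmod n$. Formally I would prove the step "$a\mid x$, $b\mid x$, $\gcd(a,b)=1$ implies $ab\mid x$" via B\'ezout, then induct on $m$.

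Finally, $\psi$ is surjective by counting. Both sides have exactly $n=\prod_i p_i$ elements, and an injective map between finite sets of the same size is bijective.

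For the explicit description used later in the paper, I would also record the inverse. Put $N_i=n/p_i$. Since $\gcd(N_i,p_i)=1$, there is $M_i$ with $N_iM_i\equiv 1\pmod{p_i}$. Then
$$
x=\sum_i x_iN_iM_i
$$
maps to $(x_1,\dots,x_m)$. This identifies each residue class with an $m$-tuple, as claimed.

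There is no real obstacle here. The only point needing care is the coprimality step that gives $\bigcap_i p_i\mathbb Z=n\mathbb Z$. Alternatively, the whole result follows from the standard theorem $R/\bigcap I_i\cong\prod R/I_i$ for pairwise comaximal ideals $I_i=p_i\mathbb Z$; this is the version cited from Dummit--Foote.
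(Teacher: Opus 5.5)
Your proof is correct. The paper gives no argument of its own for this statement; it only cites Dummit--Foote, where the result is proved for pairwise comaximal ideals $I_i$ in the form $R/\bigcap_i I_i\cong\prod_i R/I_i$. Your proposal is essentially that standard argument specialized to $I_i=p_i\mathbb Z$, as you yourself note.

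The one difference is how you get surjectivity. You count elements, which relies on finiteness. The cited proof instead builds preimages from relations $1=a+b$ with $a\in I_i$, $b\in I_j$. That is exactly what your explicit inverse $\sum_i x_iN_iM_i$ does, since $\psi(N_iM_i)$ is the $i$-th standard idempotent. So your inverse formula already proves surjectivity by itself, and the counting step is a harmless redundancy.
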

	
	\medskip
	 The following criterion translates comaximality in $\mathbb Z_n$ into a gcd condition, (Banerjee, eq.~(2) \cite{Banerjee2022}, see also \cite{afkhami}). For $x,y\in \mathbb Z_n$, the vertices $x$ and $y$ are adjacent in $\Gamma(\mathbb Z_n)$ if and only if
		\begin{equation}\label{prop:gcdcriterion}
		\gcd\big(\gcd(x,n),\gcd(y,n)\big)=1 \qquad\text{or}\qquad \gcd(x,y,n)=1.
		\end{equation} 
	
	\medskip
	 The next inequality is the standard connectivity chain that will allow us to identify edge connectivity once vertex connectivity is known.	
	\begin{proposition}[Whitney inequalities, Theorem~4.2.10, \cite{West2001}]\label{prop:Whitney}
		For every connected graph $G$,
		$$
		\kappa(G)\leq \lambda(G)\leq \delta(G).
		$$
	\end{proposition}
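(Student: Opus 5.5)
The statement to prove is the Whitney chain $\kappa(G)\leq\lambda(G)\leq\delta(G)$ for a connected graph $G$. I will prove the two inequalities separately, starting with the easier one on the right.

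\emph{Second inequality, $\lambda(G)\le\delta(G)$.} Choose a vertex $v$ with $\deg_G(v)=\delta(G)$. Deleting the $\delta(G)$ edges incident to $v$ leaves $v$ isolated. If $G$ has at least two vertices, the resulting graph is disconnected, so these edges form an edge cut. Hence $\lambda(G)\le\delta(G)$. The trivial graph is handled by the usual conventions.

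\emph{First inequality, $\kappa(G)\le\lambda(G)$.} Let $F=[S,\bar S]$ be a minimum edge cut, so $|F|=\lambda(G)$ and both sides $S,\bar S$ are nonempty. I split into two cases.

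\emph{Case 1: every vertex of $S$ is adjacent to every vertex of $\bar S$.} Then $|F|=|S||\bar S|\ge |V(G)|-1$, because $ab\ge a+b-1$ for positive integers $a,b$. Also $\kappa(G)\le |V(G)|-1$ by convention, since deleting all but one vertex is the extremal case. Therefore $\kappa(G)\le\lambda(G)$.

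\emph{Case 2: there exist $x\in S$ and $y\in\bar S$ with $xy\notin E(G)$.} Build a vertex set $T$ from $F$ as follows. For each edge $uw\in F$ with $u\in S$ and $w\in\bar S$, place into $T$ the endpoint $w$ if $u=x$, and otherwise the endpoint $u$. Then $|T|\le|F|$, and neither $x$ nor $y$ lies in $T$:
\begin{itemize}
\item $x\notin T$, since $x$ is only ever skipped in favour of its partner;
\item $y\notin T$, since $y$ could only be added as the partner of $x$, but $xy\notin E(G)$.
\end{itemize}
Now suppose an $x$--$y$ path survives in $G-T$. It must cross from $S$ to $\bar S$ through some edge $uw\in F$. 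If $u\ne x$, then $u\in T$, a contradiction. If $u=x$, then $w\in T$, again a contradiction. So $T$ separates $x$ from $y$, and $\kappa(G)\le|T|\le\lambda(G)$.

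The only delicate point is the choice of endpoints in Case 2, which must guarantee that $x$ and $y$ both survive. The rule of taking the $\bar S$-endpoint for edges at $x$, and the $S$-endpoint for all other edges, handles this. The rest is routine.
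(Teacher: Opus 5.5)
Your proof is correct and is essentially the standard argument for Theorem~4.2.10 in West, which the paper cites without giving a proof of its own. The edges at a minimum-degree vertex give $\lambda(G)\le\delta(G)$. For $\kappa(G)\le\lambda(G)$, you split on whether the minimum edge cut $[S,\bar S]$ is complete bipartite, and otherwise choose one endpoint per cut edge (the $\bar S$-endpoint for edges at $x$, the $S$-endpoint elsewhere) to obtain an $x$--$y$ separating set of size at most $\lambda(G)$. One small point of phrasing: $\kappa(G)\le |V(G)|-1$ follows from the definition of $\kappa$ (deleting vertices until a single vertex remains counts), not merely from a convention.
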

	
	\medskip
	 The following statement records the best available bound from the earlier literature and is the starting point of the present paper.	
	\begin{proposition}[Banerjee, Theorem~6.3, \cite{Banerjee2022}]\label{prop:BanerjeeUpper}
		Let $n=p_1p_2\cdots p_m$ be squarefree with distinct primes in increasing order. Then
		$$
		\kappa(G_2)\leq \phi(p_1p_2\cdots p_{m-1})=\prod_{i=1}^{m-1}(p_i-1).
		$$
	\end{proposition}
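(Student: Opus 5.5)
The plan is to exhibit one explicit vertex whose neighbourhood has exactly $\prod_{i=1}^{m-1}(p_i-1)$ elements, and whose removal leaves at least one other vertex. Deleting that neighbourhood then disconnects $G_2$, so $\kappa(G_2)\le \delta(G_2)\le \prod_{i=1}^{m-1}(p_i-1)$.

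\textbf{Step 1: adjacency in CRT coordinates.} By Theorem~\ref{thm:CRT}, write each $x\in\mathbb Z_n$ as $(x_1,\dots,x_m)$ and set $Z(x)=\{i: x_i=0\}$. The ideal $(x,y)$ is the product of the coordinate ideals $(x_i,y_i)\subseteq \F_{p_i}$. Hence $(x,y)=\mathbb Z_n$ exactly when no coordinate has $x_i=y_i=0$, that is, when $Z(x)\cap Z(y)=\emptyset$. This agrees with criterion~\eqref{prop:gcdcriterion}, since $\gcd(x,y,n)=d_{Z(x)\cap Z(y)}$. The vertices of $G_2$ are exactly the tuples with $\emptyset\ne Z(x)\ne[m]$.

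\textbf{Step 2: the extremal vertex.} Choose $x$ with $Z(x)=[m-1]$, for instance $x=(0,\dots,0,1)$. This is a vertex of $G_2$ because $m\ge2$. A vertex $y$ is adjacent to $x$ iff $Z(y)\cap[m-1]=\emptyset$. Since $Z(y)\neq\emptyset$, this forces $Z(y)=\{m\}$. So
$$N(x)=\{y:\ y_m=0,\ y_i\ne0 \text{ for all } i<m\},$$
and therefore $|N(x)|=\prod_{i=1}^{m-1}(p_i-1)$.

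\textbf{Step 3: the cut is genuine.} We must check that $G_2-N(x)$ contains a vertex other than $x$. If $m\ge3$, the vertex $(0,1,\dots,1)$ has zero-set $\{1\}$. This set is neither $\{m\}$ nor $[m-1]$, so the vertex is not in $N(x)$ and differs from $x$. If $m=2$, the vertex $(2,1)$ works when $p_1\ge3$ and $(0,2)$ works when $p_1=2$ (note $p_2\ge3$). In both cases it lies in $X_{\{1\}}$ or is distinct from $x$ and outside $N(x)=X_{\{2\}}$. Then $x$ is isolated in $G_2-N(x)$, so $N(x)$ is a vertex cut, and the bound follows.

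The only delicate point is the small case $m=2$ in Step~3, where one must make sure the residual graph is not just $\{x\}$. The rest is a direct count.
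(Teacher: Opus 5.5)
Your argument is the same as the paper's own proof of this bound (Lemma~\ref{lem:separator}). There, every vertex of the minimum-degree layer $X_{[m]\setminus\{m\}}$ has neighbourhood exactly $X_{\{m\}}$, so deleting $X_{\{m\}}$ isolates that layer.

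There is one slip in Step~3. For $m=2$ and $p_1\ge 3$, the element $(2,1)$ has both coordinates nonzero. It is therefore a unit and not a vertex of $G_2$ at all.

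The repair is immediate and removes the case split altogether. For every $m\ge 2$, $(0,\ldots,0,2)$ is a second vertex of $X_{[m-1]}$, since $p_m\ge 3$. Hence $|X_{[m-1]}|=p_m-1\ge 2$, and this whole layer becomes isolated in $G_2-X_{\{m\}}$. This is how the paper's proof, tacitly, ensures that the remaining graph is disconnected rather than a single vertex.
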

	
	\medskip
	 The next observation connects the ring-theoretic notation from \cite{Banerjee2022,afkhami} with the graph studied here.
	
	\begin{remark}\label{remark}
		In \cite{Banerjee2022,afkhami}, a graph denoted by $G_2$, is described through the divisor classes
		$$
		A_d=\{x\in \mathbb Z_n:\ \gcd(x,n)=d\},
		$$
		where $d$ runs over the proper divisors of $n$. The present paper keeps the notation $G_2$ for the same induced subgraph but reindexes the classes by subsets of $[m]$, which is more efficient in the squarefree case.
	\end{remark}
	
	 The existing literature  gives a strong structural and spectral description of $\Gamma(\mathbb Z_n)$ and, in particular, an upper bound for $\kappa(G_2)$ in the squarefree case \cite{Banerjee2022}. What was missing is an exact formula for $\kappa(G_2)$, a proof of sharpness of the bound $\kappa(G_2)\leq \phi(p_1p_2\cdots p_{m-1})$, and a combinatorial explanation of why the extremal separator is tied to the largest prime factor. Likewise, no explicit distance formula for $G_2$ seems to have been isolated in the squarefree setting. The remainder of the paper fills these gaps.
	
	\section{A weighted disjointness model for $G_2$}\label{sec:model}
	 The results of this section recast $G_2$ as a weighted disjointness graph. This viewpoint is not explicit in \cite{Banerjee2022} and is the combinatorial engine behind all later connectivity and distance proofs.
	
	Using Theorem \ref{thm:CRT}, we identify $\mathbb Z_n$ with $\F_{p_1}\times\cdots\times\F_{p_m}$.
	 For $x=(x_1,\dots,x_m)\in \mathbb Z_n$, define the zero-set
		$$
		Z(x)=\{i\in [m]: x_i=0\}.
		$$
		For every nonempty proper subset $S\subsetneq [m]$, define
		$$
		X_S=\{x\in V(G_2): Z(x)=S\}.
		$$ 
	
	\medskip
	 The next proposition identifies exactly which zero-sets occur in the graph $G_2$.
	\begin{proposition}\label{prop:nonemptyproper}
		A vector $x=(x_1,\dots,x_m)\in \mathbb Z_n$ belongs to $V(G_2)$ if and only if $Z(x)$ is a nonempty proper subset of $[m]$.
	\end{proposition}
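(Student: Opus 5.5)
The plan is to translate the two conditions defining $V(G_2)$, namely $x\neq 0$ and $x$ a nonunit, into conditions on the zero-set $Z(x)$ through the isomorphism of Theorem \ref{thm:CRT}. Since that isomorphism is a ring isomorphism, it maps $0$ to $0$ and units to units, so both conditions can be checked coordinatewise in $\F_{p_1}\times\cdots\times\F_{p_m}$.

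First, $x=0$ if and only if every coordinate $x_i$ vanishes, that is, if and only if $Z(x)=[m]$. Hence $x\neq 0$ exactly when $Z(x)$ is a proper subset of $[m]$.

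Second, in a direct product of rings, an element is a unit precisely when each coordinate is a unit in its factor. In the field $\F_{p_i}$ the units are the nonzero elements. Therefore $x$ is a unit if and only if $x_i\neq 0$ for all $i$, which says $Z(x)=\emptyset$. So $x$ is a nonunit exactly when $Z(x)$ is nonempty.

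As a consistency check with the paper's gcd language, one can also argue arithmetically. Under the CRT identification, $x_i=0$ means $p_i\mid x$, so $Z(x)$ is the set of indices with $p_i\mid \gcd(x,n)$; in particular $\gcd(x,n)=d_{Z(x)}$. Then $x$ is a unit iff $\gcd(x,n)=1$ iff $Z(x)=\emptyset$, and $x\equiv 0$ iff $n\mid x$ iff $Z(x)=[m]$.

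Combining the two characterizations, $x\in V(G_2)$ if and only if $\emptyset\neq Z(x)\subsetneq[m]$. There is no real obstacle here. The only point needing care is the standard fact that units of a product are the coordinatewise units, which follows immediately because $(x_i)(y_i)=(1,\dots,1)$ holds if and only if $x_iy_i=1$ for every $i$.
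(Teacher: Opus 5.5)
Your proposal is correct and follows the paper's proof: both use the identification $\mathbb Z_n\cong\F_{p_1}\times\cdots\times\F_{p_m}$ to show that $x\neq 0$ is equivalent to $Z(x)\neq[m]$, and that $x$ is a nonunit is equivalent to $Z(x)\neq\emptyset$, via the coordinatewise description of zero and of units in a product of fields. Your justification that units of a product are exactly the coordinatewise units, and your gcd cross-check $\gcd(x,n)=d_{Z(x)}$, are both correct, though neither is needed for the argument.
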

	
	\begin{proof}
		A vertex of $G_2$ is by definition nonzero and nonunit. In the product ring $\F_{p_1}\times\cdots\times\F_{p_m}$, an element is a unit exactly when all coordinates are nonzero, and it is zero exactly when all coordinates are zero. Therefore, $x$ is nonzero and nonunit if and only if at least one coordinate vanishes and at least one coordinate is nonzero, which is equivalent to saying that $Z(x)$ is nonempty and proper.
	\end{proof}
	
	\medskip
	 The next proposition gives the exact size of each support class. 
	\begin{proposition}\label{prop:classsize}
		For every nonempty proper subset $S\subsetneq [m]$,
		$$
		|X_S|=\prod_{i\notin S}(p_i-1).
		$$
	\end{proposition}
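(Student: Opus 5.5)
The plan is to count $X_S$ directly through the coordinate description given by Theorem \ref{thm:CRT}. Fix a nonempty proper subset $S\subsetneq[m]$. By Proposition \ref{prop:nonemptyproper}, every tuple $x=(x_1,\dots,x_m)\in\F_{p_1}\times\cdots\times\F_{p_m}$ with $Z(x)=S$ is automatically a vertex of $G_2$. Hence $X_S$ is exactly the set of all tuples with $Z(x)=S$, and no further membership condition needs to be checked.

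Next, I will unwind the condition $Z(x)=S$ coordinatewise. It says that $x_i=0$ for every $i\in S$ and $x_i\neq 0$ for every $i\notin S$. These conditions are independent across coordinates. Therefore $X_S$ is in bijection with the Cartesian product
$$
\prod_{i\in S}\{0\}\times\prod_{i\notin S}\F_{p_i}^{\times}.
$$
For $i\in S$ there is exactly one choice of $x_i$. For $i\notin S$ there are $|\F_{p_i}^\times|=p_i-1$ choices, since $\F_{p_i}$ is a field of order $p_i$. Multiplying the choices gives $|X_S|=\prod_{i\notin S}(p_i-1)$.

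As an optional consistency check, I can translate back to the divisor description of Remark \ref{remark}. Under the isomorphism, $x_i=0$ holds if and only if $p_i\mid x$, so $X_S=A_{d_S}$. The count then agrees with the standard formula $|A_d|=\phi(n/d)$, because $\phi(n/d_S)=\prod_{i\notin S}(p_i-1)$.

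There is no real obstacle here. The only point requiring care is that membership in $V(G_2)$ imposes nothing beyond $Z(x)=S$, and this is exactly what Proposition \ref{prop:nonemptyproper} guarantees.
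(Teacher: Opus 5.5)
Your proof is correct and matches the paper's argument: fix the coordinates in $S$ to be zero, count the $p_i-1$ nonzero choices in each coordinate $i\notin S$, and multiply these independent choices. Your explicit appeal to Proposition~\ref{prop:nonemptyproper}, confirming that every such tuple is a vertex of $G_2$, makes precise a point the paper leaves implicit, and your $\phi(n/d_S)$ cross-check is valid but optional.
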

	
	\begin{proof}
		A vector lies in $X_S$ exactly when its coordinates indexed by $S$ are zero and its coordinates indexed by $[m]\setminus S$ are nonzero. The coordinates in $S$ are forced, while for every $i\notin S$ there are exactly $p_i-1$ nonzero choices in $\F_{p_i}$. Multiplying the independent choices, we obtain
		$$
		|X_S|=\prod_{i\notin S}(p_i-1).
		$$
	\end{proof}
	
	\medskip
	 The next theorem converts ring-theoretic adjacency into set-theoretic disjointness.	
	\begin{theorem}\label{thm:adjdisjoint}
		Let $x\in X_S$ and $y\in X_T$, where $S$ and $T$ are nonempty proper subsets of $[m]$. Then $x$ and $y$ are adjacent in $G_2$ if and only if $S\cap T=\emptyset$.
	\end{theorem}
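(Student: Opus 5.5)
The plan is to work entirely in the product ring $\F_{p_1}\times\cdots\times\F_{p_m}$ given by Theorem \ref{thm:CRT}, and to use the definition of comaximality directly: $x$ and $y$ are adjacent in $\Gamma(\mathbb Z_n)$ exactly when the ideal $(x,y)$ is the whole ring. Since adjacency in $G_2$ is adjacency in $\Gamma(\mathbb Z_n)$ restricted to $V(G_2)$, it suffices to decide when $(x,y)=R$ for $R=\F_{p_1}\times\cdots\times\F_{p_m}$.

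The key step is to record that every ideal of a finite product of fields is itself a product of ideals of the factors. Each $\F_{p_i}$ has only the ideals $0$ and $\F_{p_i}$. Hence every ideal of $R$ has the form $\prod_{i\in[m]} I_i$ with $I_i\in\{0,\F_{p_i}\}$. Consequently the ideal generated by $x$ and $y$ is
\[
(x,y)=\prod_{i=1}^m (x_i,y_i)\F_{p_i}.
\]
Its $i$th factor is zero precisely when $x_i=y_i=0$, that is, when $i\in Z(x)\cap Z(y)=S\cap T$. Thus $(x,y)=R$ if and only if $S\cap T=\emptyset$.

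To avoid quoting the ideal structure, one can argue each direction directly. If $i\in S\cap T$, then the $i$th coordinate of every element $ax+by$ vanishes, so $1\notin(x,y)$. Conversely, if $S\cap T=\emptyset$, define $a_i=x_i^{-1}$ and $b_i=0$ when $x_i\neq 0$; otherwise $i\notin T$, so $y_i\neq0$, and we set $a_i=0$, $b_i=y_i^{-1}$. Then $ax+by=(1,\dots,1)$. As a cross-check, this agrees with the gcd criterion \eqref{prop:gcdcriterion}: $\gcd(x,n)=d_S$ and $\gcd(y,n)=d_T$, and $\gcd(d_S,d_T)=d_{S\cap T}$.

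The only point needing care is distinctness. Adjacency is defined for $x\neq y$, and when $S\cap T=\emptyset$ we automatically have $S\neq T$, because $S$ is nonempty. So $x\neq y$, and there is no issue with loops. Conversely, two distinct vertices with $S=T$ are nonadjacent, which is consistent with the statement. I expect no real obstacle here; the converse construction of the B\'ezout-type combination is the only step that requires an explicit argument.
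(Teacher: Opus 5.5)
Your proof is correct, but it takes a different route from the paper. The paper never works with ideals of the product ring. It cites Banerjee's arithmetic gcd criterion \eqref{prop:gcdcriterion} and uses the fact that for squarefree $n$ a vertex $x\in X_S$ has $\gcd(x,n)=d_S$. It then reads off $\gcd(d_S,d_T)=\prod_{i\in S\cap T}p_i$, which equals $1$ exactly when $S\cap T=\emptyset$; this is essentially what you keep only as a cross-check. The paper also silently retains only the first disjunct of the criterion, which is harmless because $\gcd(\gcd(x,n),\gcd(y,n))=\gcd(x,y,n)$. You instead argue from the definition of comaximality inside $\F_{p_1}\times\cdots\times\F_{p_m}$, either through the product structure of its ideals or through the explicit coordinatewise B\'ezout combination $ax+by=(1,\dots,1)$ and the obstructing coordinate $i\in S\cap T$. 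Your version is self-contained. It needs neither the externally cited criterion nor the link between vanishing CRT coordinates and prime divisors of $\gcd(x,n)$, which the paper uses without comment. It also makes clear that the only input from squarefreeness is that each factor is a field, so a nonzero coordinate is a unit. The paper's version is shorter given the citation and stays in the divisor-class language $A_d$ of the earlier literature. Your remark that $S\cap T=\emptyset$ forces $S\neq T$, so no loop issue arises, is a small point the paper does not address.
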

	
	\begin{proof}
		By Equation \ref{prop:gcdcriterion}, $x$ and $y$ are adjacent in $\Gamma(\mathbb Z_n)$ if and only if
		$$
		\gcd\big(\gcd(x,n),\gcd(y,n)\big)=1.
		$$
		Since $n$ is squarefree and $x\in X_S$, we have $\gcd(x,n)=d_S=\prod_{i\in S}p_i$. Likewise, $\gcd(y,n)=d_T=\prod_{i\in T}p_i$. Thus, we have
		$$
		\gcd\big(\gcd(x,n),\gcd(y,n)\big)=\gcd(d_S,d_T)=\prod_{i\in S\cap T}p_i.
		$$
		This equals $1$ precisely when $S\cap T=\emptyset$. Since $G_2$ is an induced subgraph of $\Gamma(\mathbb Z_n)$, and the same criterion describes adjacency inside $G_2$.
	\end{proof}
	
	\medskip
	 The following corollary identifies the global structure of $G_2$ as a weighted blow-up.	
	\begin{corollary}\label{cor:blowup}
		Let $\mathcal D_m$ be the graph whose vertex set is
		$$
		\mathscr P^\ast([m])=\{S\subseteq [m]:\ \emptyset\neq S\subsetneq [m]\},
		$$
		where two subsets are adjacent exactly when they are disjoint. Then $G_2$ is the blow-up of $\mathcal D_m$ obtained by replacing each vertex $S$ of $\mathcal D_m$ with the independent set $X_S$ of size $\prod_{i\notin S}(p_i-1)$.
	\end{corollary}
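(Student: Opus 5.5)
The corollary follows directly from the three preceding results. The plan is to exhibit the blow-up structure explicitly and then check that it is exactly $G_2$, meaning the vertex partition, the sizes, and the adjacency all match.

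\emph{Vertex partition.} By Proposition \ref{prop:nonemptyproper}, every vertex $x\in V(G_2)$ has $Z(x)\in\mathscr P^\ast([m])$. Since $Z(x)$ is a single well-defined set, the classes $X_S$ for $S\in\mathscr P^\ast([m])$ are pairwise disjoint and their union is $V(G_2)$. Proposition \ref{prop:classsize} shows that each $X_S$ has size $\prod_{i\notin S}(p_i-1)$. This is at least $1$ because $S$ is proper, so no class is empty and every vertex of $\mathcal D_m$ really corresponds to a nonempty block.

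\emph{Adjacency.} Take $x\in X_S$ and $y\in X_T$. By Theorem \ref{thm:adjdisjoint}, $x\sim y$ if and only if $S\cap T=\emptyset$, and this is precisely adjacency of $S$ and $T$ in $\mathcal D_m$. Two consequences follow.
\begin{enumerate}[label=(\roman*)]
\item Inside a single class we have $S=T$, and $S\cap S=S\neq\emptyset$. So $X_S$ is an independent set, which matches the fact that $\mathcal D_m$ has no loops.
\item Between two distinct classes, either every pair is joined or no pair is, according to whether $S\cap T=\emptyset$. So the bipartite graph between $X_S$ and $X_T$ is complete when $ST\in E(\mathcal D_m)$ and empty otherwise.
\end{enumerate}
These are exactly the defining properties of the blow-up in which each vertex $S$ is replaced by an independent set of the stated size.

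\emph{Expected obstacle.} There is essentially no hard step. The only point needing care is that Theorem \ref{thm:adjdisjoint} is applied to distinct vertices $x\neq y$ that may lie in the same class, so that condition (i) is covered. The gcd criterion \eqref{prop:gcdcriterion} is stated for arbitrary $x,y$, so this requires no extra work.
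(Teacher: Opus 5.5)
Your proposal is correct and follows the paper's proof essentially verbatim. The partition of $V(G_2)$ into the classes $X_S$ comes from Proposition~\ref{prop:nonemptyproper}, the class sizes from Proposition~\ref{prop:classsize}, and both the independence of each $X_S$ and the all-or-nothing adjacency between distinct classes from Theorem~\ref{thm:adjdisjoint}. Your extra check that every $X_S$ is nonempty is a harmless addition the paper leaves implicit, though it holds because each factor $p_i-1\geq 1$ rather than because $S$ is proper.
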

	
	\begin{proof}
		By Proposition \ref{prop:nonemptyproper}, the vertex set of $G_2$ is the disjoint union of the classes $X_S$ over all nonempty proper $S\subsetneq [m]$. By Theorem \ref{thm:adjdisjoint}, there are no edges inside a class $X_S$, and between two distinct classes $X_S$ and $X_T$ either all possible edges appear or none appear, according as $S\cap T=\emptyset$ or not. This is exactly the definition of the stated blow-up of $\mathcal D_m$.
	\end{proof}
	
	\medskip
	 The next corollary records the order of the graph and serves as a consistency check for the partition.	
	\begin{corollary}\label{cor:order}
		The graph $G_2$ has $ |V(G_2)|=n-1-\phi(n) $ vertices.
	\end{corollary}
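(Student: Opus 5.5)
We count the vertex set of $G_2$ in two ways: once directly from the ring, and once by summing the class sizes $|X_S|$.

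\textbf{Direct count.} The vertices of $G_2$ are the elements of $\mathbb Z_n$ that are neither zero nor units. There are $n$ elements in all, exactly one of them is zero, and exactly $\phi(n)$ of them are units. The zero element is not a unit, since $m\geq 1$ and so $n>1$. The set of zero elements and the set of units are therefore disjoint, and removing both leaves
$$
|V(G_2)|=n-1-\phi(n).
$$

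\textbf{Count through the partition.} By Proposition \ref{prop:nonemptyproper}, the classes $X_S$, with $S$ running over the nonempty proper subsets of $[m]$, partition $V(G_2)$. Proposition \ref{prop:classsize} then gives
$$
|V(G_2)|=\sum_{\emptyset\neq S\subsetneq[m]}\ \prod_{i\notin S}(p_i-1).
$$
To evaluate this sum, substitute $T=[m]\setminus S$. As $S$ runs over the nonempty proper subsets, so does $T$. Now compare with the sum over all subsets $T\subseteq[m]$. Expanding the product gives
$$
\sum_{T\subseteq[m]}\prod_{i\in T}(p_i-1)=\prod_{i=1}^m\bigl(1+(p_i-1)\bigr)=\prod_{i=1}^m p_i=n.
$$
Two terms of this full sum are excluded from ours:
\begin{itemize}
\item $T=\emptyset$, which contributes $1$ and corresponds to the zero element;
\item $T=[m]$, which contributes $\prod_{i=1}^m(p_i-1)=\phi(n)$ and corresponds to the units.
\end{itemize}
Subtracting these two terms from $n$ gives $n-1-\phi(n)$, the same value as the direct count.

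No step here is a real obstacle. The only point needing care is to make sure the two excluded terms correspond exactly to the zero element and to the units. This is precisely the content of Proposition \ref{prop:nonemptyproper}.
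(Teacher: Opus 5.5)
Your direct count is the paper's proof: $n$ residues, minus the zero element, minus the $\phi(n)$ units. Your second count, which sums $|X_S|=\prod_{i\notin S}(p_i-1)$ over the nonempty proper subsets using $\prod_{i=1}^m\bigl(1+(p_i-1)\bigr)=n$, is a correct general verification that the paper only carries out numerically, for $n=30$ in Example~\ref{ex:n30classes}.
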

	
	\begin{proof}
		A vertex of $G_2$ is precisely a residue class modulo $n$ that is neither zero nor a unit. There are $n$ residue classes in total, exactly one of them is zero, and exactly $\phi(n)$ of them are units. Hence, we obtain
		$$
		|V(G_2)|=n-1-\phi(n).
		$$
	\end{proof}
	
	The partition in terms of the gcd classes $A_d$ is already sufficient for spectral calculations \cite{Banerjee2022, afkhami}, but the support-set indexing used here is sharper for the squarefree case. It turns the arithmetic of gcd values into the combinatorics of disjoint subsets and makes the location of the extremal cut visible before any calculation begins.
	\begin{example}\label{ex:n30classes}
		Let $n=30=2\cdot 3\cdot 5$, so $m=3$. The nonempty proper subsets of $[3]$ are
		$$
		\{1\},\{2\},\{3\},\{1,2\},\{1,3\},\{2,3\}.
		$$
		By Proposition \ref{prop:classsize}, the corresponding class sizes are listed in Table \ref{tab:n30classes}. Their sum is $21=30-1-\phi(30)$, which is in agreement with Corollary \ref{cor:order}. The adjacency pattern is displayed in Figure \ref{fig:n30model}, where each node represents a class $X_S$, and the number below the class label is its cardinality. The figure shows that singleton classes are pairwise adjacent, while a two-element support class is adjacent only to its complementary singleton class.
	\end{example}
	\begin{table}[H]
		\centering
		\caption{Support classes for $n=30$.}
		\label{tab:n30classes}
		\begin{tabular}{ccc}
			\toprule
			Subset $S$ & Divisor $d_S$ & $|X_S|$ \\
			\midrule
			$\{1\}$ & $2$ & $8$ \\
			$\{2\}$ & $3$ & $4$ \\
			$\{3\}$ & $5$ & $2$ \\
			$\{1,2\}$ & $6$ & $4$ \\
			$\{1,3\}$ & $10$ & $2$ \\
			$\{2,3\}$ & $15$ & $1$ \\
			\bottomrule
		\end{tabular}
	\end{table}
	
	\begin{figure}[H]
		\centering
		\begin{tikzpicture}[
			every node/.style={draw, ellipse, align=center, minimum width=20mm, minimum height=10mm},
			scale=1
			]
			\node (a1) at (0,2.2) {$X_{\{1\}}$\\ $\scriptstyle 8$};
			\node (a2) at (3,3.1) {$X_{\{2\}}$\\ $\scriptstyle 4$};
			\node (a3) at (6,2.2) {$X_{\{3\}}$\\ $\scriptstyle 2$};
			\node (b12) at (0,0) {$X_{\{1,2\}}$\\ $\scriptstyle 4$};
			\node (b13) at (3,0) {$X_{\{1,3\}} $\\ $\scriptstyle 2$};
			\node (b23) at (6,0) {$X_{\{2,3\}}$\\ $\scriptstyle 1$};
			
			\draw (a1)--(a2);
			\draw (a1)--(a3);
			\draw (a2)--(a3);
			\draw (b12)--(a3);
			\draw (b13)--(a2);
			\draw (b23)--(a1);
		\end{tikzpicture}
		\vspace*{-2mm}
		\caption{Weighted disjointness model for $n=30$.}
		\label{fig:n30model}
	\end{figure}
	
	\section{Degree profile and local structure}\label{sec:degree}
	 This section derives explicit local formulas that are missing in the existing literature. In particular, we identify the unique minimum-degree layer and isolate the exact neighborhood that later becomes a minimum vertex cut.
	
	For $S\subseteq [m]$, we write $S^c=[m]\setminus S$. The next proposition identifies the entire neighborhood of a vertex from its zero-set alone.	
	\begin{proposition}\label{prop:neighborhood}
		If $x\in X_S$, then
		$$
		N(x)=\bigsqcup_{\emptyset\neq T\subseteq S^c} X_T.
		$$
		In particular, the neighborhood depends only on $S$.
	\end{proposition}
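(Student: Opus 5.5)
The plan is to read off the neighbourhood directly from the adjacency criterion already established in Theorem \ref{thm:adjdisjoint}, combined with the partition of $V(G_2)$ into support classes from Proposition \ref{prop:nonemptyproper}.

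\textbf{Step 1: partition the candidates.} Fix $x\in X_S$. By Proposition \ref{prop:nonemptyproper}, every vertex $y\in V(G_2)$ lies in exactly one class $X_T$ with $T$ a nonempty proper subset of $[m]$. Hence $N(x)$ is the disjoint union, over such $T$, of the sets $N(x)\cap X_T$.

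\textbf{Step 2: apply the adjacency criterion.} By Theorem \ref{thm:adjdisjoint}, a vertex $y\in X_T$ is adjacent to $x$ if and only if $S\cap T=\emptyset$. The criterion depends only on $T$ and not on the particular $y$, so $N(x)\cap X_T$ is either all of $X_T$ (when $S\cap T=\emptyset$) or empty. The condition $S\cap T=\emptyset$ is equivalent to $T\subseteq S^c$.

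\textbf{Step 3: reconcile the index sets.} It remains to check that the admissible $T$ are exactly the nonempty subsets of $S^c$. Since $S\neq\emptyset$, any $T\subseteq S^c$ satisfies $T\neq[m]$, so properness is automatic. Thus the family of nonempty proper $T$ with $T\subseteq S^c$ coincides with the family of nonempty $T\subseteq S^c$. This family is nonempty because $S$ is proper, so $S^c\neq\emptyset$.

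\textbf{Step 4: loops.} Since $S\cap S=S\neq\emptyset$, the class $X_S$ itself is never among the admissible $T$, so $x$ does not appear in its own neighbourhood. This is consistent with the graph being simple.

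\textbf{Conclusion.} The resulting union is disjoint because the classes $X_T$ are pairwise disjoint, and it visibly depends only on $S$. No step is a genuine obstacle; the only point needing care is Step 3, where the properness condition on $T$ is absorbed by $S\neq\emptyset$.
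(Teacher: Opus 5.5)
Your proof is correct and follows the same route as the paper: apply Theorem \ref{thm:adjdisjoint} class by class, then note that for nonempty proper $T$ the condition $S\cap T=\emptyset$ is equivalent to $\emptyset\neq T\subseteq S^c$. Your Step 3, observing that properness of $T$ is automatic because $S\neq\emptyset$, spells out a point that the paper's one-line argument leaves implicit.
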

	
	\begin{proof}
		By Theorem \ref{thm:adjdisjoint}, a vertex $y\in X_T$ is adjacent to $x\in X_S$ if and only if $T\cap S=\emptyset$. Since every $T$ indexing a vertex of $G_2$ is nonempty and proper, the condition $T\cap S=\emptyset$ is equivalent to $\emptyset\neq T\subseteq S^c$. The union is disjoint, since the classes $X_T$ are disjoint.
	\end{proof}
	
	\medskip
	 The next theorem gives an exact degree formula in terms of the support set.	
	\begin{theorem}\label{thm:degreeformula}
		If $x\in X_S$,  then
		$$
		\deg(x)=\sum_{\emptyset\neq T\subseteq S^c}\prod_{i\notin T}(p_i-1)
		$$
		and, equivalently,
		$$
		\deg(x)=\prod_{i\in S}(p_i-1)\left(\prod_{j\in S^c}p_j-\prod_{j\in S^c}(p_j-1)\right).
		$$
	\end{theorem}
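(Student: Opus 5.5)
The first formula follows directly from what is already established. By Proposition \ref{prop:neighborhood}, $N(x)$ is the disjoint union of the classes $X_T$ over all $\emptyset\neq T\subseteq S^c$. Each such $T$ is automatically proper, since $S\neq\emptyset$ forces $T\subseteq S^c\subsetneq[m]$, so every such class is a genuine class of $G_2$. Taking cardinalities and applying Proposition \ref{prop:classsize} to each summand, $|X_T|=\prod_{i\notin T}(p_i-1)$, gives
$$\deg(x)=\sum_{\emptyset\neq T\subseteq S^c}\prod_{i\notin T}(p_i-1).$$

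For the closed form I would split the index set $[m]\setminus T$ as $S\sqcup(S^c\setminus T)$. This is valid because $T\subseteq S^c$. Then each summand factors as
$$\prod_{i\in S}(p_i-1)\prod_{j\in S^c\setminus T}(p_j-1).$$
The first factor does not depend on $T$, so it comes out of the sum. What remains is $\sum_{\emptyset\neq T\subseteq S^c}\prod_{j\in S^c\setminus T}(p_j-1)$.

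To evaluate this sum, substitute $U=S^c\setminus T$. As $T$ runs over the nonempty subsets of $S^c$, $U$ runs over the proper subsets of $S^c$, that is, all subsets except $U=S^c$ itself. The subset-product expansion gives
$$\sum_{U\subseteq S^c}\prod_{j\in U}(p_j-1)=\prod_{j\in S^c}\bigl(1+(p_j-1)\bigr)=\prod_{j\in S^c}p_j.$$
Removing the single excluded term $U=S^c$, which equals $\prod_{j\in S^c}(p_j-1)$, leaves $\prod_{j\in S^c}p_j-\prod_{j\in S^c}(p_j-1)$. Multiplying back by the factor $\prod_{i\in S}(p_i-1)$ yields the stated formula.

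The only point needing care is the bookkeeping in the reindexing. One has to check that the correspondence $T\leftrightarrow U$ is a bijection between nonempty subsets of $S^c$ and proper subsets of $S^c$. One also has to note that $S^c\neq\emptyset$, because $S$ is proper, so the sum is nonempty and the formula is meaningful. Beyond that, the argument is routine.
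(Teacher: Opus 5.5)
Your proof is correct. The first identity is argued exactly as in the paper, by summing the class sizes of Proposition~\ref{prop:classsize} over the neighborhood decomposition of Proposition~\ref{prop:neighborhood}.

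For the closed form you take a different route from the paper. You derive it algebraically from the first formula: you factor out $\prod_{i\in S}(p_i-1)$, reindex by $U=S^c\setminus T$, and evaluate the sum with the expansion $\prod_{j\in S^c}\bigl(1+(p_j-1)\bigr)=\sum_{U\subseteq S^c}\prod_{j\in U}(p_j-1)$.

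The paper instead returns to the CRT coordinate model and counts the neighbors of $x$ directly. A neighbor $y$ must have all coordinates on $S$ nonzero, which gives $\prod_{i\in S}(p_i-1)$ choices. Its coordinates on $S^c$ must not all be nonzero, so that $y$ is a nonunit, which gives $\prod_{j\in S^c}p_j-\prod_{j\in S^c}(p_j-1)$ choices.

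The two arguments are the same identity seen from two sides. Group the paper's admissible coordinate patterns on $S^c$ by their (nonempty) zero set $T$, and you recover your sum term by term. So the paper's count is a double-counting proof of the algebraic identity you verify.

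Your version makes the word ``equivalently'' literal, since it shows the two expressions agree as an identity, and it never leaves the layer model. The paper's version is shorter and obtains the closed form independently of the first formula. It relies on a small implicit check that such $y$ is automatically a nonzero vertex different from $x$, which holds because $S\neq\emptyset$ and $y$ is nonzero on $S$.
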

	
	\begin{proof}
		The first identity follows immediately from Proposition \ref{prop:neighborhood} and Proposition \ref{prop:classsize}, since the degree is the sum of the sizes of all neighboring classes. For the second identity, we count the neighbors directly in the coordinate model. Since $x\in X_S$, a vector $y=(y_1,\dots,y_m)$ is adjacent to $x$ if and only if the following two conditions hold:  (i) for every $i\in S$, the coordinate $y_i$ is nonzero, and (ii)   on the complementary set $S^c$, the vector $(y_j)_{j\in S^c}$ is not entirely nonzero, as $y$ must remain a nonunit. There are $\prod_{i\in S}(p_i-1)$ ways to choose the coordinates indexed by $S$. On the coordinates indexed by $S^c$, there are $\prod_{j\in S^c}p_j$ arbitrary choices and $\prod_{j\in S^c}(p_j-1)$ forbidden choices in which all coordinates are nonzero. Multiplying these independent choices, we obtain
		$$
		\deg(x)=\prod_{i\in S}(p_i-1)\left(\prod_{j\in S^c}p_j-\prod_{j\in S^c}(p_j-1)\right).
		$$
	\end{proof}
	
	Figure \ref{fig:degreeblock} shows the block diagram for the counting argument in Theorem \ref{thm:degreeformula}, where it represents the two independent parts of the neighbor count, forced nonzero coordinates on $S$ and partially constrained coordinates on $S^c$.
	\begin{figure}[ht]
		\centering
		\begin{tikzpicture}[
			box/.style={draw, rounded corners, minimum width=36mm, minimum height=10mm, align=center},
			arr/.style={-Latex, thick}
			]
			\node[box, fill=blue!8] (v) at (0,0) {$x\in X_S$};
			\node[box, fill=green!8, right=50mm of v] (s) {coordinates in $S$\\ must be nonzero};
			\node[box, fill=yellow!12, below=12mm of s] (c) {coordinates in $S^c$\\ arbitrary, but not all nonzero};
			\node[box, fill=red!8, below=12mm of v] (d) {$\deg(x)=\prod_{i\in S}(p_i-1)\left(\prod_{j\in S^c}p_j-\prod_{j\in S^c}(p_j-1)\right)$};
			
			\draw[arr] (v) -- (s);
			\draw[arr] (v) -- (d);
			\draw[arr] (s) -- (c);
			\draw[arr] (c) -- (d);
		\end{tikzpicture}
		\caption{Block diagram for the counting argument in Theorem \ref{thm:degreeformula}.}
		\label{fig:degreeblock}
	\end{figure}
	\medskip
	The following corollary shows that each layer consists of pairwise nonadjacent vertices with identical open neighborhoods.	
	\begin{corollary}\label{cor:falsetwins}
		If $x,y\in X_S$, then $x$ and $y$ are nonadjacent and $N(x)=N(y)$. Thus, each class $X_S$ is a false-twin class.
	\end{corollary}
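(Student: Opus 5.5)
The corollary follows directly from two earlier results. Nonadjacency comes from Theorem \ref{thm:adjdisjoint}, and equality of neighborhoods comes from Proposition \ref{prop:neighborhood}.

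Fix $x,y\in X_S$ with $x\neq y$, so that $Z(x)=Z(y)=S$ with $S$ a nonempty proper subset of $[m]$.

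\emph{Nonadjacency.} Apply Theorem \ref{thm:adjdisjoint} with $T=S$. The vertices $x$ and $y$ are adjacent if and only if $S\cap S=\emptyset$. But $S\cap S=S\neq\emptyset$, since $S$ is nonempty by Proposition \ref{prop:nonemptyproper}, so $xy\notin E(G_2)$. Equivalently, each $X_S$ is an independent set, as already noted in Corollary \ref{cor:blowup}.

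\emph{Equal neighborhoods.} By Proposition \ref{prop:neighborhood},
$$N(x)=\bigsqcup_{\emptyset\neq T\subseteq S^c}X_T=N(y).$$
The right-hand side depends only on $S$, and $x$ and $y$ share the same zero-set $S$.

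\emph{Consistency check.} The definition of false twins requires $u\neq v$ with equal open neighborhoods. Neither $x$ nor $y$ belongs to $N(x)$: every class $X_T$ appearing in $N(x)$ has $T\subseteq S^c$, so $T\cap S=\emptyset$ and in particular $T\neq S$. Hence the equality $N(x)=N(y)$ is not affected by either vertex lying in its own neighborhood or in the other's.

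Combining nonadjacency with equal neighborhoods, any two distinct vertices of $X_S$ are false twins, so $X_S$ is a false-twin class. There is no real obstacle here. The only point needing care is invoking nonemptiness of $S$, which is exactly what rules out adjacency within a layer.
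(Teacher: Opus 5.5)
Your proof is correct and matches the paper's own argument: nonadjacency comes from Theorem~\ref{thm:adjdisjoint} with $T=S$ because $S\cap S=S\neq\emptyset$, and $N(x)=N(y)$ comes from Proposition~\ref{prop:neighborhood}, since $N(x)$ depends only on $S$. The extra consistency check is harmless but not needed, because $y\notin N(x)$ is just the nonadjacency you had already proved.
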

	
	\begin{proof}
		Since $S\cap S=S\neq\emptyset$, Theorem \ref{thm:adjdisjoint} shows that $x$ and $y$ are not adjacent. The equality of neighborhoods is exactly the second statement in Proposition \ref{prop:neighborhood}.
	\end{proof}
	
	\medskip
	 The next theorem locates the minimum degree exactly. 
	\begin{theorem}\label{thm:mindegree}
		The minimum degree of $G_2$ is
		$$
		\delta(G_2)=\prod_{i=1}^{m-1}(p_i-1),
		$$
		and this value is attained precisely by the vertices in the class
		$$
		X_{[m]\setminus\{m\}}.
		$$
	\end{theorem}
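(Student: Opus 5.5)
The plan is to rewrite the closed degree formula of Theorem~\ref{thm:degreeformula} as a sum of positive products, and then compare it directly with the candidate value $\prod_{i=1}^{m-1}(p_i-1)$. Put $a_i=p_i-1\geq 1$. Since $p_1<\cdots<p_m$, we have $a_1<a_2<\cdots<a_m$, so $a_m$ is the unique largest of these numbers. For $x\in X_S$ let $A=S^c$, which is nonempty because $S$ is proper. Expanding $\prod_{j\in A}(a_j+1)$ over subsets $B\subseteq A$ and cancelling the term $B=A$ gives
$$
\deg(x)=\prod_{i\in S}a_i\Bigl(\prod_{j\in A}(a_j+1)-\prod_{j\in A}a_j\Bigr)=\sum_{B\subsetneq A}\ \prod_{i\in S\cup B}a_i .
$$
Every summand here is a positive integer.

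I will then split into two cases according to $|A|$.

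\emph{Case $|A|=1$.} Write $A=\{j\}$. The sum has the single term $B=\emptyset$, so $\deg(x)=\prod_{i\neq j}a_i=\Bigl(\prod_{i=1}^m a_i\Bigr)/a_j$. This is minimized exactly when $a_j$ is maximal, which forces $j=m$. The resulting minimum value is $\prod_{i=1}^{m-1}(p_i-1)$, attained by the vertices of $X_{[m]\setminus\{m\}}$, and every other $j$ gives a strictly larger degree.

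\emph{Case $|A|\geq 2$.} Choose $j\in A$ and take $B=A\setminus\{j\}$. Then $S\cup B=[m]\setminus\{j\}$, so this summand equals $\prod_{i\neq j}a_i\geq\prod_{i=1}^{m-1}a_i$, using $a_j\leq a_m$. The sum also contains at least one further positive summand, for instance $B=\emptyset$, which is distinct from $A\setminus\{j\}$ because $|A|\geq 2$. Hence $\deg(x)>\prod_{i=1}^{m-1}(p_i-1)$ strictly.

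Combining the two cases yields both the value of $\delta(G_2)$ and the claim that it is attained precisely on $X_{[m]\setminus\{m\}}$. The only delicate point is strictness when $p_1=2$, since then $a_1=1$ and some summands may coincide in value. This is handled by relying on the positivity of the extra summand, rather than on any comparison between the sizes of different summands.
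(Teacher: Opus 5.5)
Your proof is correct and follows essentially the paper's argument: the same case split on $|S^c|$, an exact computation when $|S^c|=1$, and, when $|S^c|\geq 2$, a lower bound by a singleton term $\prod_{i\neq j}(p_i-1)=|X_{\{j\}}|$ plus a further positive term. Your expanded sum is just the paper's neighborhood sum $\sum_{\emptyset\neq T\subseteq S^c}|X_T|$ reindexed by $T=A\setminus B$, so the difference is cosmetic. The paper gets strictness from two singleton classes $|X_{\{j_1\}}|+|X_{\{j_2\}}|$, whereas you use one singleton class and the term $|X_{S^c}|$ (your $B=\emptyset$).
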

	
	\begin{proof}
		Let $x\in X_S$, and and let $J=S^c$. Then by Proposition \ref{prop:neighborhood}, we have
		$$
		N(x)=\bigsqcup_{\emptyset\neq T\subseteq J}X_T.
		$$
		Hence, $N(x)$ contains each singleton class $X_{\{j\}}$ with $j\in J$. Therefore, we obtain
		$$
		\deg(x)\geq \sum_{j\in J}|X_{{j}}|.
		$$
		Now, $ |X_{\{j\}}|=\prod_{i\neq j}(p_i-1). $ As $p_1<\cdots<p_m$, the smallest among these singleton-class sizes is
		$$
		|X_{\{m\}}|=\prod_{i=1}^{m-1}(p_i-1).
		$$
		 If $|J|\geq 2$, then $\deg(x)$ contains at least two disjoint singleton layers in its neighborhood, so we have
		$$
		\deg(x)\geq |X_{\{j_1\}}|+|X_{\{j_2\}}|>|X_{\{m\}}|
		$$
		for any distinct $j_1,j_2\in J$. Thus,  no such vertex has minimum degree. If $|J|=1$, say $J=\{j\}$, then $S=[m]\setminus\{j\}$. In this case, the only nonempty subset of $J$ is $\{j\}$ itself, so Proposition \ref{prop:neighborhood} implies that $N(x)=X_{\{j\}},$ 	and therefore, we have
		$$
		\deg(x)=|X_{\{j\}}|=\prod_{i\neq j}(p_i-1).
		$$
		This is minimized exactly when $j=m$, and hence
		$$
		\delta(G_2)=|X_{\{m\}}|=\prod_{i=1}^{m-1}(p_i-1),
		$$
		with equality holds precisely on the class $X_{[m]\setminus\{m\}}$.
	\end{proof}
	
	\medskip
	 The next corollary identifies the common neighborhood of every minimum-degree vertex.
	\begin{corollary}\label{cor:minlayerneighborhood}
		For every vertex $x\in X_{[m]\setminus\{m\}}$,
		$ N(x)=X_{\{m\}},$ and in particular, all minimum-degree vertices share the same neighborhood.
	\end{corollary}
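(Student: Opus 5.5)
This is a direct specialization of Proposition~\ref{prop:neighborhood} to the set $S=[m]\setminus\{m\}$, combined with the characterization of minimum-degree vertices in Theorem~\ref{thm:mindegree}.

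First, fix $x\in X_{[m]\setminus\{m\}}$ and compute its complement: $S^c=\{m\}$. The nonempty subsets of $\{m\}$ consist of $\{m\}$ alone. Since $m\geq 2$, the set $\{m\}$ is a nonempty proper subset of $[m]$, so $X_{\{m\}}$ is a genuine class of $G_2$. Proposition~\ref{prop:neighborhood} then gives
$$
N(x)=\bigsqcup_{\emptyset\neq T\subseteq \{m\}}X_T=X_{\{m\}}.
$$

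Second, for the claim about all minimum-degree vertices, invoke Theorem~\ref{thm:mindegree}. It states that the vertices of degree $\delta(G_2)$ are precisely those in $X_{[m]\setminus\{m\}}$. By the first step, each such vertex has neighborhood exactly $X_{\{m\}}$, so they all share this neighborhood. Corollary~\ref{cor:falsetwins} gives the same conclusion independently. As a consistency check, $|X_{\{m\}}|=\prod_{i=1}^{m-1}(p_i-1)=\delta(G_2)$ by Proposition~\ref{prop:classsize}.

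No step is a real obstacle. The only point requiring care is confirming that $\{m\}$ is a proper subset of $[m]$, so that $X_{\{m\}}$ is indeed a vertex class of $G_2$; this uses the standing assumption $m\geq 2$.
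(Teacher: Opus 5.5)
Your proof is correct and is essentially the paper's argument: specialize Proposition~\ref{prop:neighborhood} to $S=[m]\setminus\{m\}$, observe that $S^c=\{m\}$ has $\{m\}$ as its only nonempty subset, and read off $N(x)=X_{\{m\}}$. Your appeal to Theorem~\ref{thm:mindegree} for the second clause spells out what the paper leaves implicit. Corollary~\ref{cor:falsetwins} is not really an independent route, though, since it also needs Theorem~\ref{thm:mindegree} to place all minimum-degree vertices in a single class.
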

	
	\begin{proof}
		This is the special case $S=[m]\setminus\{m\}$ of Proposition \ref{prop:neighborhood}, since then $S^c=\{m\}$ and the only nonempty subset of $S^c$ is $\{m\}$.
	\end{proof}
	
	 In \cite{Banerjee2022,afkhami}, the authors isolates a separator of cardinality $\phi(p_1\cdots p_{m-1})$, but the local reason for its extremality is not transparent there. Theorem \ref{thm:mindegree} and Corollary \ref{cor:minlayerneighborhood} show that this separator is exactly the common neighborhood of the unique minimum-degree layer. This perspective is the key to the exact connectivity proof of Problem 7.2 in \cite{Banerjee2022}.
	
	\begin{example}\label{ex:n210degree}
		Let $n=210=2\cdot 3\cdot 5\cdot 7$. Then Figure \ref{fig:n210model} shows its weighted disjointness model, with each node represents a support class $X_S$, and the number below the class label is its cardinality. Two classes are joined exactly when their index sets are disjoint. The highlighted red node $X_{\{1,2,3\}}$ is the minimum-degree class, and its common neighborhood is the highlighted blue node $X_{\{4\}}$, so every vertex in $X_{\{1,2,3\}}$ has degree $|X_{\{4\}}|=8$.	The degree values for some representative support classes are listed in Table \ref{tab:n210degree}.
	\end{example}
	
	\begin{table}[H]
		\centering
		\caption{Representative degree values for $n=210$.}
		\label{tab:n210degree}
		\begin{tabular}{ccc}
			\toprule
			Support class $X_S$ & $|X_S|$ & Degree of a vertex in $X_S$ \\
			\midrule
			$X_{\{1\}}$ & $48$ & $57$ \\
			$X_{\{2\}}$ & $24$ & $92$ \\
			$X_{\{3\}}$ & $12$ & $120$ \\
			$X_{\{4\}}$ & $8$ & $132$ \\
			$X_{\{1,2\}}$ & $24$ & $22$ \\
			$X_{\{1,2,3\}}$ & $6$ & $8$ \\
			$X_{\{1,2,4\}}$ & $4$ & $12$ \\
			\bottomrule
		\end{tabular}
	\end{table}
	\begin{figure}[H]
		\centering
		\begin{tikzpicture}[
			every node/.style={
				draw,
				ellipse,
				align=center,
				minimum width=12mm,
				minimum height=10mm,
				inner sep=1.2pt
			},
			scale=0.95
			]
			
			\node (s1) at (0,2.8) {$X_{\{1\}}$\\ $\scriptstyle 48$};
			\node (s2) at (2.8,3.7) {$X_{\{2\}}$\\ $\scriptstyle 24$};
			\node (s3) at (5.6,3.6) {$X_{\{3\}}$\\ $\scriptstyle 12$};
			\node[fill=blue!12] (s4) at (8.4,2.5) {$X_{\{4\}}$\\ $\scriptstyle 8$};
			
			\node (p12) at (-1.4,-0.7) {$X_{\{1,2\}}$\\ $\scriptstyle 24$};
			\node (p13) at (-0.8,0.8) {$X_{\{1,3\}}$\\ $\scriptstyle 12$};
			\node (p14) at (3.1,0) {$X_{\{1,4\}}$\\ $\scriptstyle 8$};
			\node (p23) at (5.7,0) {$X_{\{2,3\}}$\\ $\scriptstyle 6$};
			\node (p24) at (9.2,0.9) {$X_{\{2,4\}}$\\ $\scriptstyle 4$};
			\node (p34) at (9.9,-0.7) {$X_{\{3,4\}}$\\ $\scriptstyle 2$};
			
			\node[fill=red!15] (t123) at (8,-1.8) {$X_{\{1,2,3\}}$\\ $\scriptstyle 6$};
			\node (t124) at (3.4,-1.8) {$X_{\{1,2,4\}}$\\ $\scriptstyle 4$};
			\node (t134) at (5.6,-1.8) {$X_{\{1,3,4\}}$\\ $\scriptstyle 2$};
			\node (t234) at (1.1,-1.8) {$X_{\{2,3,4\}}$\\ $\scriptstyle 1$};
			
			\draw (s1)--(s2);
			\draw (s1)--(s3);
			\draw (s1)--(s4);
			\draw (s2)--(s3);
			\draw (s2)--(s4);
			\draw (s3)--(s4);
			
			\draw (s1)--(p23);
			\draw (s1)--(p24);
			\draw (s1)--(p34);
			
			\draw (s2)--(p13);
			\draw (s2)--(p14);
			\draw (s2)--(p34);
			
			\draw (s3)--(p12);
			\draw (s3)--(p14);
			\draw (s3)--(p24);
			
			\draw (s4)--(p12);
			\draw (s4)--(p13);
			\draw (s4)--(p23);
			
			\draw (s1)--(t234);
			\draw (s2)--(t134);
			\draw (s3)--(t124);
			\draw (s4)--(t123);
			
			\draw (p12)--(p34);
			\draw (p13)--(p24);
			\draw (p14)--(p23);
			
		\end{tikzpicture}
		\vspace*{-3mm}
		\caption{Weighted disjointness model for $n=210=2\cdot 3\cdot 5\cdot 7$.}
		\label{fig:n210model}
	\end{figure}

	\section{Exact solution of Problem 7.2}\label{sec:connectivity}
	 This section contains the main contribution of the paper, the exact value of $\kappa(G_2)$. Theorem \ref{thm:mainconnectivity} sharpens Proposition \ref{prop:BanerjeeUpper} from an upper bound to an equality and proves, in addition, that $G_2$ is maximally connected.
	
	\medskip
	 The next lemma recovers Banerjee's separator in the support-set language and shows that it really disconnects the graph.	
	\begin{lemma}\label{lem:separator}
		The class $X_{\{m\}}$ is a vertex cut of $G_2$, and consequently,
		$$
		\kappa(G_2)\leq |X_{\{m\}}|=\prod_{i=1}^{m-1}(p_i-1).
		$$
	\end{lemma}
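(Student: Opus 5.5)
The plan is to exhibit two vertices that lie in different components of $G_2 - X_{\{m\}}$, using Corollary \ref{cor:minlayerneighborhood}. Take any $x\in X_{[m]\setminus\{m\}}$. This class is nonempty: by Proposition \ref{prop:classsize} it has size $p_m-1\geq 1$, and since $m\geq 2$ its index set $[m]\setminus\{m\}$ is a nonempty proper subset. By Corollary \ref{cor:minlayerneighborhood}, $N(x)=X_{\{m\}}$. Hence, after deleting $X_{\{m\}}$, the vertex $x$ is isolated.

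To conclude that $X_{\{m\}}$ is a genuine vertex cut, I will check that $G_2-X_{\{m\}}$ has at least one vertex besides $x$. For $m\geq 3$ the class $X_{\{1\}}$ will do: it is nonempty, distinct from both $X_{\{m\}}$ and $X_{[m]\setminus\{m\}}$, and so survives the deletion. For $m=2$ the candidate is $X_{\{1\}}$, and here $[2]\setminus\{2\}=\{1\}$, so this is exactly $x$'s own class. Its size is $p_2-1$.

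The case $m=2$ is therefore the main obstacle. If $p_2-1\geq 2$ (always true, since $p_2\geq 3$), a second vertex $x'\in X_{\{1\}}$ remains. By Corollary \ref{cor:falsetwins}, $x$ and $x'$ are nonadjacent false twins whose common neighborhood $X_{\{2\}}$ has been deleted, so both are isolated and the remaining graph is disconnected. In general I can use the same argument uniformly: $x$ is isolated in $G_2-X_{\{m\}}$, and the remaining vertex set minus $x$ is nonempty, so the graph is disconnected.

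This yields $\kappa(G_2)\leq |X_{\{m\}}|$. Finally, Proposition \ref{prop:classsize} evaluates $|X_{\{m\}}|=\prod_{i\neq m}(p_i-1)=\prod_{i=1}^{m-1}(p_i-1)$, which is the stated bound and agrees with Proposition \ref{prop:BanerjeeUpper}.
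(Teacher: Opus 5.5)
Your proof is correct and follows the paper's own argument: Corollary~\ref{cor:minlayerneighborhood} gives $N(x)=X_{\{m\}}$ for $x\in X_{[m]\setminus\{m\}}$, so deleting $X_{\{m\}}$ isolates $x$, and Proposition~\ref{prop:classsize} supplies the size. Your one addition is the explicit check that $G_2-X_{\{m\}}$ still has a vertex other than $x$. For $m=2$ this uses $|X_{\{1\}}|=p_2-1\geq 2$. The paper leaves this nondegeneracy condition implicit, so making it explicit is a small gain in rigor.
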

	
	\begin{proof}
		For any vertex $x\in X_{[m]\setminus\{m\}}$, Corollary \ref{cor:minlayerneighborhood}, gives
		$ N(x)=X_{\{m\}}. $
		Therefore, after deleting the entire class $X_{\{m\}}$, every vertex of $X_{[m]\setminus\{m\}}$ becomes isolated. Hence, $G_2-X_{\{m\}}$ is disconnected, so $X_{\{m\}}$ is a vertex cut. The cardinality formula follows from Proposition \ref{prop:classsize}.
	\end{proof}
	
	\medskip
	The next lemma shows that any smaller deletion leaves at least one vertex in the critical class $X_{\{m\}}$.	
	\begin{lemma}\label{lem:survivingu}
		Let $W\subseteq V(G_2)$ with
		$$
		|W|<\prod_{i=1}^{m-1}(p_i-1).
		$$
		Then there exists a vertex
		$ u\in X_{\{m\}}\setminus W. $
	\end{lemma}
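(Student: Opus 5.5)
This is a pure counting (pigeonhole) statement, so the plan is to compare the size of $W$ with the size of the class $X_{\{m\}}$. By Proposition \ref{prop:classsize} applied to $S=\{m\}$, whose complement is $\{1,\dots,m-1\}$, we have
$$
|X_{\{m\}}|=\prod_{i\notin\{m\}}(p_i-1)=\prod_{i=1}^{m-1}(p_i-1).
$$
The hypothesis on $W$ therefore reads $|W|<|X_{\{m\}}|$.

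Next, note that $|W\cap X_{\{m\}}|\leq |W|<|X_{\{m\}}|$. Consequently
$$
|X_{\{m\}}\setminus W|=|X_{\{m\}}|-|W\cap X_{\{m\}}|>0,
$$
so $X_{\{m\}}\setminus W$ is nonempty, and any element of it serves as the required vertex $u$.

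The only point to double-check is that $X_{\{m\}}$ really consists of vertices of $G_2$. Since $m\geq 2$, the set $\{m\}$ is a nonempty proper subset of $[m]$, so Proposition \ref{prop:nonemptyproper} applies. Beyond that there is no real obstacle: the lemma is a one-line cardinality comparison. Its role in the paper is to fix a surviving vertex of the critical class, which the later connectivity argument will presumably use as a hub, since $u\in X_{\{m\}}$ is adjacent to every class $X_T$ with $m\notin T$.
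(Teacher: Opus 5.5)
Your proof is correct and follows essentially the same route as the paper: compute $|X_{\{m\}}|=\prod_{i=1}^{m-1}(p_i-1)$ via Proposition~\ref{prop:classsize} and observe that a set $W$ of strictly smaller size cannot contain all of $X_{\{m\}}$. Your extra check that $\{m\}$ is a nonempty proper subset of $[m]$ (so that $X_{\{m\}}\subseteq V(G_2)$) is a harmless addition the paper leaves implicit.
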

	
	\begin{proof}
		By Proposition \ref{prop:classsize}, we have
		$$
		|X_{\{m\}}|=\prod_{i=1}^{m-1}(p_i-1).
		$$
		Since $|W|<|X_{\{m\}}|$, the set $W$ cannot contain all vertices of $X_{\{m\}}$. Hence some $u\in X_{\{m\}}\setminus W$ survives.
	\end{proof}
	
	\medskip
	 The next lemma shows that every surviving vertex whose zero-set avoids $m$ is directly linked to the surviving anchor $u$.	
	\begin{lemma}\label{lem:directtou}
		Let $u\in X_{\{m\}}$, and let $v\in X_S$ be any vertex with $m\notin S$. Then $u$ and $v$ are adjacent.
	\end{lemma}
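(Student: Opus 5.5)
This is a direct application of the adjacency criterion in Theorem \ref{thm:adjdisjoint}. Since $u\in X_{\{m\}}$, its zero-set is $Z(u)=\{m\}$, and $v$ has zero-set $Z(v)=S$. By Theorem \ref{thm:adjdisjoint}, the two vertices are adjacent exactly when $\{m\}\cap S=\emptyset$. That condition is just the hypothesis $m\notin S$, so adjacency follows.

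Two small points need checking before the theorem is quoted.

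\textbf{The indexing sets are admissible.} Theorem \ref{thm:adjdisjoint} applies only when both sets are nonempty proper subsets of $[m]$. This holds for $\{m\}$ because $m\geq 2$. It holds for $S$ by Proposition \ref{prop:nonemptyproper}, since $v\in V(G_2)$.

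\textbf{The vertices are distinct.} We need $u\neq v$, so that adjacency is meaningful in a simple graph. This is automatic: $Z(u)=\{m\}$ while $Z(v)=S\not\ni m$, so the zero-sets differ and hence $u\neq v$.

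Equivalently, Proposition \ref{prop:neighborhood} gives $N(v)=\bigsqcup_{\emptyset\neq T\subseteq S^c}X_T$. Since $m\in S^c$, the class $X_{\{m\}}$ appears in this union, so $u\in N(v)$.

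I do not expect any real obstacle here. The only care needed is the admissibility and distinctness check above. The lemma will then serve as the hub step in the main connectivity argument: every surviving vertex whose zero-set avoids $m$ is joined directly to the surviving anchor $u$ given by Lemma \ref{lem:survivingu}.
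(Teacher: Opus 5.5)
Your proof is correct and matches the paper's argument: both note that $m\notin S$ gives $S\cap\{m\}=\emptyset$ and then invoke Theorem~\ref{thm:adjdisjoint}. Your admissibility and distinctness checks, and the alternative via Proposition~\ref{prop:neighborhood}, are valid points that the paper leaves implicit.
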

	
	\begin{proof}
		Since $m\notin S$, we have $S\cap \{m\}=\emptyset$. Theorem \ref{thm:adjdisjoint} now implies that $v$ is adjacent to every vertex of $X_{\{m\}}$, in particular to $u$.
	\end{proof}
	
	\medskip
	 The next lemma is the key robustness step, every surviving vertex whose zero-set contains $m$ still reaches the anchor through one surviving neighbor.
	\begin{lemma}\label{lem:twostep}
		Let $W\subseteq V(G_2)$ with $ |W|<\prod_{i=1}^{m-1}(p_i-1), $ and choose $u\in X_{\{m\}}\setminus W$ as in Lemma \ref{lem:survivingu}. If $v\in V(G_2)\setminus W$ and $m\in Z(v)$, then $v$ lies in the same connected component as $u$ in $G_2-W$.
	\end{lemma}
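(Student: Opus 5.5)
The plan is to join $v$ to the anchor $u$ by a path of length two, $v - w - u$. The middle vertex $w$ will be a surviving vertex in a singleton class $X_{\{j\}}$ with $j\neq m$ and $j\notin Z(v)$.

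First I locate a usable index $j$. Write $S=Z(v)$. By Proposition \ref{prop:nonemptyproper}, $S$ is a nonempty proper subset of $[m]$, so $S^c\neq\emptyset$. Since $m\in S$, every $j\in S^c$ satisfies $j\le m-1$. I fix such a $j$. The case $S=\{m\}$ (so $v$ lies in the same class as $u$) is included, with $S^c=[m-1]$.

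Next I show that $X_{\{j\}}$ cannot be entirely contained in $W$. By Proposition \ref{prop:classsize},
$$
|X_{\{j\}}|=\prod_{i\neq j}(p_i-1)=\frac{p_m-1}{p_j-1}\prod_{i=1}^{m-1}(p_i-1).
$$
Since $j<m$ gives $p_j<p_m$, this is strictly larger than $\prod_{i=1}^{m-1}(p_i-1)$, which in turn exceeds $|W|$. So there is some $w\in X_{\{j\}}\setminus W$.

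Finally I check the two adjacencies.
\begin{itemize}
\item Since $\{j\}\cap S=\emptyset$, Theorem \ref{thm:adjdisjoint} (equivalently Proposition \ref{prop:neighborhood}) gives that $w$ is adjacent to $v$.
\item Since $m\notin\{j\}$, Lemma \ref{lem:directtou} gives that $w$ is adjacent to $u$.
\end{itemize}
None of $u$, $v$, $w$ lies in $W$, so $v\,w\,u$ is a path in $G_2-W$. Hence $v$ and $u$ are in the same component.

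The one step needing care is the strict inequality $|X_{\{j\}}|>|W|$. It relies on choosing $j\neq m$, which is automatic because $m\in S$, together with the strict ordering of the primes. Everything else is a direct application of the disjointness criterion.
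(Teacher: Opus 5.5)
Your proof is correct. It differs from the paper's only in how the surviving middle vertex $w$ is produced.

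The paper uses Theorem~\ref{thm:mindegree} to get $\deg(v)\ge\delta(G_2)=\prod_{i=1}^{m-1}(p_i-1)>|W|$, so $v$ has some surviving neighbor $w$. It then notes that \emph{every} neighbor of $v$ automatically satisfies $m\notin Z(w)$, because $Z(w)\cap Z(v)=\emptyset$ and $m\in Z(v)$. Hence $w$ is adjacent to $u$ by Lemma~\ref{lem:directtou}.

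You instead fix one index $j\in Z(v)^c$. You then show directly that the single class $X_{\{j\}}$, which lies inside $N(v)\cap N(u)$, is too large to be contained in $W$. In effect you inline the relevant piece of the proof of Theorem~\ref{thm:mindegree}.

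Your version needs only Proposition~\ref{prop:classsize}, not the full minimum-degree computation, and it places the middle vertex in a singleton class. The paper's version is shorter once Theorem~\ref{thm:mindegree} is available, and it shows that any surviving neighbor of $v$ can serve as the middle vertex.

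One small correction to your closing comment: the cardinality bound needs neither $j\neq m$ nor strictness. For every $j$ we have $|X_{\{j\}}|\ge|X_{\{m\}}|=\prod_{i=1}^{m-1}(p_i-1)>|W|$, since $p_m$ is the largest prime. The condition $j\neq m$ is what makes $w$ adjacent to both $v$ and $u$, not what makes the count work.
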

	
	\begin{proof}
		Since $v$ survives, it belongs to some class $X_S$ with $m\in S$. So, by Theorem \ref{thm:mindegree}, we have
		$$
		\deg(v)\geq \delta(G_2)=\prod_{i=1}^{m-1}(p_i-1).
		$$
		As $|W|$ is strictly smaller than this number, the deleted set $W$ cannot contain all neighbors of $v$. Hence, there exists a neighbor $ w\in N(v)\setminus W. $ Since $v$ and $w$ are adjacent, Theorem \ref{thm:adjdisjoint} gives
		$ Z(v)\cap Z(w)=\emptyset. $ As $m\in Z(v)$, it follows that $m\notin Z(w)$. Therefore, Lemma \ref{lem:directtou}  implies that $w$ is adjacent to $u$. Hence,  $ v-w-u $ is a path in $G_2-W$, and $v$ lies in the same component as $u$.
	\end{proof}
	
	\medskip
	 The next theorem solves Problem 7.2 of \cite{Banerjee2022} completely. 
	 \begin{theorem}\label{thm:mainconnectivity}
		Let $n=p_1p_2\cdots p_m$ with distinct primes in increasing order. Then
		$$
		\kappa(G_2)=\prod_{i=1}^{m-1}(p_i-1)=\frac{\phi(n)}{p_m-1}.
		$$
	\end{theorem}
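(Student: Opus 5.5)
The plan is to squeeze $\kappa(G_2)$ between matching upper and lower bounds, both equal to $|X_{\{m\}}|$. The upper bound $\kappa(G_2)\le\prod_{i=1}^{m-1}(p_i-1)$ is exactly Lemma \ref{lem:separator}. Equivalently, it follows from Proposition \ref{prop:BanerjeeUpper}, or from $\kappa\le\delta$ together with Theorem \ref{thm:mindegree}, but the explicit separator is cleaner. Before using it, I would check that the separator is genuine: $G_2-X_{\{m\}}$ must contain a vertex outside $X_{[m]\setminus\{m\}}$ (for instance $X_{\{1\}}$ when $m\ge 2$, noting $\{1\}\neq[m]\setminus\{m\}$ unless $m=2$, in which case $X_{\{m\}}$ and $X_{\{1\}}$ are complementary layers forming a complete bipartite graph $K_{a,b}$ and one checks directly $\kappa=\min(a,b)=p_1-1$). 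The identity $\prod_{i=1}^{m-1}(p_i-1)=\phi(n)/(p_m-1)$ is immediate from multiplicativity of $\phi$ on squarefree $n$.

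For the lower bound, I take an arbitrary $W\subseteq V(G_2)$ with $|W|<\prod_{i=1}^{m-1}(p_i-1)$ and show that $G_2-W$ is connected (and nonempty, which is clear since $|V(G_2)|$ exceeds $|X_{\{m\}}|$). By Lemma \ref{lem:survivingu}, fix an anchor $u\in X_{\{m\}}\setminus W$. Then split the surviving vertices $v$ by whether $m\in Z(v)$.

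\begin{itemize}
\item If $m\notin Z(v)$, Lemma \ref{lem:directtou} makes $v$ adjacent to $u$.
\item If $m\in Z(v)$, Lemma \ref{lem:twostep} gives a path $v-w-u$ avoiding $W$.
\end{itemize}

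Since every surviving vertex lies in the component of $u$, we get $\kappa(G_2)\ge\prod_{i=1}^{m-1}(p_i-1)$. Combining the two bounds gives the equality.

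The main obstacle is hidden in Lemma \ref{lem:twostep}. It relies on $\deg(v)\ge\delta(G_2)>|W|$, which guarantees a surviving neighbour $w$; one must also ensure $w\ne u$ does not matter, and indeed if $w=u$ the conclusion is trivial. So the real care point is the degenerate small case. When the bound $\prod_{i=1}^{m-1}(p_i-1)$ equals $1$ (that is, $p_1=2$, $m=2$), $W$ must be empty, and connectivity of $G_2$ itself follows by the same argument with $W=\emptyset$. I would also confirm that the convention $\kappa(K_N)=N-1$ never interferes, since $G_2$ is not complete (the class $X_{[m]\setminus\{m\}}$ is independent and nonadjacent to $X_{\{1\}}$ for $m\ge3$, and the $m=2$ case is handled above).
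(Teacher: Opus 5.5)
Your proof is correct and is essentially the paper's argument. The upper bound comes from the separator $X_{\{m\}}$ (Lemma~\ref{lem:separator}). The lower bound fixes a surviving anchor $u\in X_{\{m\}}\setminus W$ and joins every other survivor to it either directly (Lemma~\ref{lem:directtou}) or through one surviving neighbour (Lemma~\ref{lem:twostep}).

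Your extra checks are harmless but unnecessary. $G_2-X_{\{m\}}$ is already disconnected because $X_{[m]\setminus\{m\}}$ consists of $p_m-1\ge 2$ isolated vertices, so no separate $m=2$ treatment is needed. Also, the neighbour $w$ in the two-step case can never equal $u$, since $m\notin Z(w)$ while $Z(u)=\{m\}$.
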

	
	\begin{proof}
		Lemma \ref{lem:separator} gives the upper bound
		$$
		\kappa(G_2)\leq \prod_{i=1}^{m-1}(p_i-1).
		$$
		For the reverse inequality, let $W\subseteq V(G_2)$ satisfy
		$$
		|W|<\prod_{i=1}^{m-1}(p_i-1).
		$$
		By Lemma \ref{lem:survivingu}, there exists a surviving vertex $u\in X_{{m}}\setminus W$. Now take any surviving vertex $v\in V(G_2)\setminus W$. If $m\notin Z(v)$, then by Lemma \ref{lem:directtou}, $v$ is adjacent to $u$. If $m\in Z(v)$, then by Lemma \ref{lem:twostep}, $v$ is connected to $u$ in $G_2-W$. Thus, every surviving vertex lies in the same connected component as $u$, so $G_2-W$ is connected. Therefore, no deletion of fewer than $\prod_{i=1}^{m-1}(p_i-1)$ vertices disconnects $G_2$. Combining the upper and lower bounds gives
		$ \kappa(G_2)=\prod_{i=1}^{m-1}(p_i-1). $
		Since $\phi(n)=\prod_{i=1}^m(p_i-1)$, the equivalent form $ \kappa(G_2)=\frac{\phi(n)}{p_m-1} $ follows immediately.
	\end{proof}
	
	\medskip
	The next corollary shows that the graph is maximally connected.	
	\begin{corollary}\label{cor:maxconn}
		The graph $G_2$ satisfies
		$$
		\kappa(G_2)=\delta(G_2)=\prod_{i=1}^{m-1}(p_i-1).
		$$
	\end{corollary}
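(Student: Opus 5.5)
This corollary follows by combining two results already proved, so no new combinatorial argument is needed. The plan is to place the value of $\kappa(G_2)$ from Theorem \ref{thm:mainconnectivity} next to the value of $\delta(G_2)$ from Theorem \ref{thm:mindegree}.

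First, Theorem \ref{thm:mainconnectivity} gives
$$\kappa(G_2)=\prod_{i=1}^{m-1}(p_i-1).$$
Second, Theorem \ref{thm:mindegree} shows that the minimum degree is attained exactly on the class $X_{[m]\setminus\{m\}}$, with
$$\delta(G_2)=|X_{\{m\}}|=\prod_{i=1}^{m-1}(p_i-1).$$
The two products coincide term by term, so $\kappa(G_2)=\delta(G_2)$, which is the defining property of maximal connectivity.

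As a consistency check, and to set up the edge-connectivity statement that presumably follows, I would also record that $G_2$ is connected. This holds because Theorem \ref{thm:mainconnectivity} gives $\kappa(G_2)\ge 1$; alternatively, every vertex reaches the anchor class $X_{\{m\}}$ within two steps, as in Lemma \ref{lem:twostep} with $W=\emptyset$. Proposition \ref{prop:Whitney} then applies and gives
$$\kappa(G_2)\le\lambda(G_2)\le\delta(G_2).$$
Since the two outer terms are equal, $\lambda(G_2)$ takes the same value.

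There is no real obstacle here. The only point to watch is that the minimum-degree formula was established for every $m\ge 2$, including $m=2$. In that case the minimum-degree class is $X_{\{1\}}$, with neighborhood $X_{\{2\}}$ of size $p_1-1$. The equality therefore holds uniformly.
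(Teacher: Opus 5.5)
Your proposal is correct and is the same argument as the paper's, which likewise just places the value of $\kappa(G_2)$ from Theorem~\ref{thm:mainconnectivity} next to the value of $\delta(G_2)$ from Theorem~\ref{thm:mindegree}. Your extra remarks are accurate: the connectivity and Whitney-chain step is the content of the paper's next result, Corollary~\ref{cor:edgeconn}, and your check of the case $m=2$ is right.
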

	
	\begin{proof}
		Theorem \ref{thm:mainconnectivity} gives the value of $\kappa(G_2)$, and Theorem \ref{thm:mindegree} gives the same value for $\delta(G_2)$.
	\end{proof}
	
	\medskip
	 The next corollary identifies the edge connectivity as well.	
	\begin{corollary}\label{cor:edgeconn}
		The edge connectivity of $G_2$ is
		$$
		\lambda(G_2)=\kappa(G_2)=\delta(G_2)=\prod_{i=1}^{m-1}(p_i-1).
		$$
	\end{corollary}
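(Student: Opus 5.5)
The plan is to sandwich $\lambda(G_2)$ between two quantities that are already known to coincide. I will use the Whitney chain of Proposition \ref{prop:Whitney}, $\kappa(G_2)\leq\lambda(G_2)\leq\delta(G_2)$, together with Corollary \ref{cor:maxconn}, which says $\kappa(G_2)=\delta(G_2)=\prod_{i=1}^{m-1}(p_i-1)$. Once both ends of the chain are equal, the middle term is forced to take the same value.

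The one hypothesis to check is that $G_2$ is connected, since Proposition \ref{prop:Whitney} is stated for connected graphs.

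\begin{itemize}
\item Connectivity follows from Theorem \ref{thm:mainconnectivity}: taking $W=\emptyset$, which satisfies $|W|<\prod_{i=1}^{m-1}(p_i-1)$ because this product is at least $1$, the theorem's argument shows $G_2-W=G_2$ is connected.
\item Alternatively, this can be read off directly from Lemmas \ref{lem:directtou} and \ref{lem:twostep} with $W=\emptyset$. Every vertex is joined to a fixed $u\in X_{\{m\}}$ by a path of length at most $2$.
\item One small point: $G_2$ should have at least two vertices, so that the connectivity notions are nondegenerate. This holds because $m\geq 2$ gives at least two nonempty proper subsets of $[m]$, each indexing a nonempty class.
\end{itemize}

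With connectivity in hand, I will write
$$
\prod_{i=1}^{m-1}(p_i-1)=\kappa(G_2)\leq\lambda(G_2)\leq\delta(G_2)=\prod_{i=1}^{m-1}(p_i-1),
$$
so all inequalities are equalities. There is no real obstacle; the only step needing care is confirming the connectedness hypothesis, which is already handled above.
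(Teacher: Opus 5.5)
Your proposal is correct and follows essentially the same route as the paper: the Whitney chain $\kappa(G_2)\leq\lambda(G_2)\leq\delta(G_2)$ from Proposition~\ref{prop:Whitney}, combined with $\kappa(G_2)=\delta(G_2)$ from Corollary~\ref{cor:maxconn}, forces equality throughout. Your explicit check that $G_2$ is connected and has at least two vertices is a harmless addition that the paper leaves implicit, since connectedness already follows from $\kappa(G_2)\geq 1$ in Theorem~\ref{thm:mainconnectivity}.
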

	
	\begin{proof}
		By Proposition \ref{prop:Whitney}, we have
		$$
		\kappa(G_2)\leq \lambda(G_2)\leq \delta(G_2).
		$$
		Corollary \ref{cor:maxconn} shows that the leftmost and rightmost terms are equal. Hence the middle term must equal them as well.
	\end{proof}
	
	 Proposition \ref{prop:BanerjeeUpper} is exactly the estimate left open in \cite{Banerjee2022}. Theorem \ref{thm:mainconnectivity} proves that this estimate is always sharp in the squarefree case and further reveals a stronger property, namely maximal connectivity. 	
	\begin{example}\label{ex:kappa210}
		Let $n=210=2\cdot 3\cdot 5\cdot 7$, see Figure \ref{fig:n210model}. Then
		$ \kappa(G_2)=(2-1)(3-1)(5-1)=8. $ The minimum cut is the class $X_{\{4\}}$, which contains exactly the vertices whose only zero coordinate is in the $7$-component. Deleting this class isolates every vertex of $X_{\{1,2,3\}}$. Table \ref{tab:connectivitycompare} compares several values of the exact formula with the  upper bound (Proposition \ref{prop:BanerjeeUpper}). They coincide in every case, as predicted by Theorem \ref{thm:mainconnectivity}.
	\end{example}
	Recall the case $n=30=2\cdot 3\cdot 5,$ (see Figure \ref{fig:cut30}). The class $X_{\{3\}}$ is a minimum cut. After deleting it, the class $X_{\{1,2\}}$ becomes isolated. Dashed edges are removed together with the cut class.
	\begin{table}[H]
		\centering
		\caption{Comparison of the exact value of $\kappa(G_2)$ with the Banerjee upper bound.}
		\label{tab:connectivitycompare}
		\begin{tabular}{ccccc}
			\toprule
			$n$ & Prime factorization & Banerjee upper bound & Exact $\kappa(G_2)$ & $\delta(G_2)$ \\
			\midrule
			$6$ & $2\cdot 3$ & $1$ & $1$ & $1$ \\
			$30$ & $2\cdot 3\cdot 5$ & $2$ & $2$ & $2$ \\
			$42$ & $2\cdot 3\cdot 7$ & $2$ & $2$ & $2$ \\
			$70$ & $2\cdot 5\cdot 7$ & $4$ & $4$ & $4$ \\
			$210$ & $2\cdot 3\cdot 5\cdot 7$ & $8$ & $8$ & $8$ \\
			$2310$ & $2\cdot 3\cdot 5\cdot 7\cdot 11$ & $48$ & $48$ & $48$ \\
			\bottomrule
		\end{tabular}
	\end{table}
	
	\begin{figure}[H]
		\centering
		\begin{tikzpicture}[
			every node/.style={draw, ellipse, align=center, minimum width=20mm, minimum height=10mm},
			scale=1
			]
			\node[fill=white] (a1) at (0,2.2) {$X_{\{1\}}$};
			\node[fill=white] (a2) at (3,2.9) {$X_{\{2\}}$};
			\node[fill=gray!25] (a3) at (6,2.2) {$X_{\{3\}}$\\ \scriptsize cut};
			\node[fill=red!15] (b12) at (0,0) {$X_{\{1,2\}}$\\ \scriptsize isolated};
			\node[fill=white] (b13) at (3,-0.4) {$X_{\{1,3\}}$};
			\node[fill=white] (b23) at (6,0) {$X_{\{2,3\}}$};
			
			\draw (a1)--(a2);
			\draw (b13)--(a2);
			\draw (b23)--(a1);
			
			\draw[dashed] (a1)--(a3);
			\draw[dashed] (a2)--(a3);
			\draw[dashed] (b12)--(a3);
		\end{tikzpicture}
		\caption{For $n=30$, the class $X_{\{3\}}$ is a minimum cut.}
		\label{fig:cut30}
	\end{figure}
	
	\section{Metric consequences and short routing}\label{sec:distance}
	 The support-set model also yields precise metric information. To the best of our knowledge, the exact distance formula below has not been stated for $G_2$ in the squarefree case. From now on, unless explicitly stated otherwise, we assume $m\geq 3$ whenever metric statements involving diameter $3$ or the center of the graph are discussed.
	
	\medskip
	 The next theorem determines the distance between two vertices entirely from their zero-sets.
	\begin{theorem}\label{thm:distance}
		Let $x\in X_S$ and $y\in X_T$, where $S$ and $T$ are nonempty proper subsets of $[m]$. Then
		$$
		d_{G_2}(x,y)=
		\begin{cases}
			1,& \text{if } S\cap T=\emptyset,\\[2mm]
			2,& \text{if } S\cap T\neq \emptyset \text{ and } S\cup T\neq [m],\\[2mm]
			3,& \text{if } S\cap T\neq \emptyset \text{ and } S\cup T=[m].
		\end{cases}
		$$
	\end{theorem}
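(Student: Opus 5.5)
The plan is to read off each case from the disjointness model. Adjacency in $G_2$ depends only on zero-sets (Theorem \ref{thm:adjdisjoint}). Every class $X_R$ with $R$ a nonempty proper subset of $[m]$ is nonempty, since $|X_R|=\prod_{i\notin R}(p_i-1)\geq 1$ by Proposition \ref{prop:classsize}. So a path of a given length exists whenever we can find a chain of nonempty proper subsets in which consecutive sets are disjoint. Throughout I take $x\neq y$. The case $S=T$ then falls under the second line, since $S\cap S=S\neq\emptyset$ and $S\cup S=S\neq[m]$.

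\textbf{Case $S\cap T=\emptyset$.} Theorem \ref{thm:adjdisjoint} gives adjacency directly, so $d_{G_2}(x,y)=1$.

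\textbf{Case $S\cap T\neq\emptyset$, $S\cup T\neq[m]$.} First, $x$ and $y$ are not adjacent by Theorem \ref{thm:adjdisjoint}, so $d_{G_2}(x,y)\geq 2$. For the upper bound, choose an index $i\notin S\cup T$ and any vertex $z\in X_{\{i\}}$. Since $\{i\}\cap S=\emptyset=\{i\}\cap T$, the vertex $z$ is adjacent to both $x$ and $y$. This gives the path $x-z-y$, so the distance is $2$.

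\textbf{Case $S\cap T\neq\emptyset$, $S\cup T=[m]$.} This is the step that needs an actual argument, in two halves.

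For the lower bound, suppose $w$ were a common neighbour of $x$ and $y$. Then $Z(w)$ would be disjoint from both $S$ and $T$, hence from $S\cup T=[m]$. That forces $Z(w)=\emptyset$, contradicting Proposition \ref{prop:nonemptyproper}. Together with nonadjacency, this gives $d_{G_2}(x,y)\geq 3$.

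For the upper bound, take $z_1\in X_{S^c}$ and $z_2\in X_{T^c}$. Both index sets are nonempty because $S$ and $T$ are proper, and both are proper because $S$ and $T$ are nonempty, so these classes are legitimate and nonempty. The required adjacencies follow from three disjointness checks:
\begin{itemize}[noitemsep]
\item $S^c\cap S=\emptyset$, so $x\sim z_1$;
\item $S^c\cap T^c=(S\cup T)^c=\emptyset$, so $z_1\sim z_2$;
\item $T^c\cap T=\emptyset$, so $z_2\sim y$.
\end{itemize}
These are distinct vertices because their zero-sets differ where needed. Thus $x-z_1-z_2-y$ is a path and $d_{G_2}(x,y)=3$.

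The only mildly delicate point is this last case. One must see that the complements $S^c$ and $T^c$ supply the connecting classes. One must also verify that they are valid (nonempty, proper) zero-sets, so that Proposition \ref{prop:classsize} guarantees the intermediate vertices exist.
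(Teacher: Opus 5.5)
Your proof is correct and follows the paper's case-by-case argument. The only cosmetic difference is in the upper bound of the third case: the paper routes through singleton classes $X_{\{a\}}$ and $X_{\{b\}}$ with $a\in T\setminus S$ and $b\in S\setminus T$, whereas you use $X_{S^c}$ and $X_{T^c}$, which under $S\cup T=[m]$ are exactly $X_{T\setminus S}$ and $X_{S\setminus T}$. Your explicit restriction to $x\neq y$ is a worthwhile precision that the paper omits, since for $x=y$ the stated formula would give $2$ instead of $0$.
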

	
	\begin{proof}
		If $S\cap T=\emptyset$, then $x$ and $y$ are adjacent by Theorem \ref{thm:adjdisjoint}, so the distance is $1$.
		 Assume now that $S\cap T\neq\emptyset$. Then $x$ and $y$ are not adjacent. If $S\cup T\neq [m]$, choose $k\in [m]\setminus (S\cup T)$. Since $m\geq 3$ is not needed in this case, by Proposition \ref{prop:classsize}, the singleton class $X_{{k}}$ is nonempty. Any vertex $z\in X_{{k}}$ satisfies ${k}\cap S=\emptyset$ and ${k}\cap T=\emptyset$, so by Theorem \ref{thm:adjdisjoint}, $x\sim z$ and $z\sim y$. Hence $d(x,y)\leq 2$. As $x$ and $y$ are not adjacent, so $d(x,y)=2$.
		
		Finally, assume that $S\cap T\neq\emptyset$ and $S\cup T=[m]$. We first show that $d(x,y)\neq 2$. If there were a common neighbor $z\in X_U$, then $U$ would have to be disjoint from both $S$ and $T$. Hence
		$ U\subseteq [m]\setminus (S\cup T)=\emptyset, $ contradicting the fact that $U$ is nonempty. Thus $d(x,y)\geq 3$.
		
		To show that $d(x,y)\leq 3$, note that $S$ and $T$ are proper subsets whose union is all of $[m]$. Therefore, both differences $T\setminus S$ and $S\setminus T$ are nonempty. Choose $a\in T\setminus S$ and $b\in S\setminus T$. Take any $u\in X_{{a}}$ and $v\in X_{{b}}$. Then ${a}\cap S=\emptyset$, so $x\sim u$. Also ${a}\cap{b}=\emptyset$, so $u\sim v$, and finally ${b}\cap T=\emptyset$, so $v\sim y$. Thus
		$ x-u-v-y $ is a path of length $3$, and therefore $d(x,y)=3$.
	\end{proof}
	
	\medskip
	The next corollary gives the diameter of the graph.	
	\begin{corollary}\label{cor:diameter}
		If $m=2$, then $G_2$ is complete bipartite and $\operatorname{diam}(G_2)=2$. If $m\geq 3$, then
		$ \operatorname{diam}(G_2)=3. $
	\end{corollary}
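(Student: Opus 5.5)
The plan is to read off the diameter from Theorem \ref{thm:distance}, treating the two cases $m=2$ and $m\geq 3$ separately. The diameter is the maximum of $d_{G_2}(x,y)$ over all pairs of vertices. By Theorem \ref{thm:distance}, this distance depends only on the pair of zero-sets $(S,T)$ and always lies in $\{1,2,3\}$. It therefore suffices to decide which of the three cases in that theorem can actually occur for each value of $m$. Before using it, I will note that the proof of Theorem \ref{thm:distance} never used $m\geq 3$: only nonemptiness of singleton classes (Proposition \ref{prop:classsize}) and the proper/nonempty conditions were needed, so it applies for all $m\geq 2$.

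\textbf{Case $m=2$.} Here the nonempty proper subsets of $[2]$ are only $\{1\}$ and $\{2\}$. By Corollary \ref{cor:blowup}, $G_2$ is the blow-up of the single edge $\{1\}$--$\{2\}$, with each endpoint replaced by an independent set. This is exactly the complete bipartite graph $K_{p_2-1,\,p_1-1}$, with parts $X_{\{1\}}$ and $X_{\{2\}}$. Its diameter is $2$, provided some part has at least two vertices. That holds because $p_2\geq 3$ gives $|X_{\{1\}}|=p_2-1\geq 2$. The distance-$2$ pair comes from two vertices of $X_{\{1\}}$ ($S=T=\{1\}$, intersecting, with union $\neq[2]$). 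Distance $3$ cannot occur, since it would need intersecting $S,T$ with $S\cup T=[2]$, which is impossible when both sets are singletons.

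\textbf{Case $m\geq 3$.} The upper bound $\operatorname{diam}(G_2)\le 3$ is immediate from Theorem \ref{thm:distance}, together with connectedness (which also follows from it, or from Theorem \ref{thm:mainconnectivity}). For the lower bound I only need one pair $S,T$ realizing the third case. I will take $S=\{1,\dots,m-1\}$ and $T=\{1,m\}$. Both are proper and nonempty because $m\geq 3$ (note $T\neq[m]$ needs $m\ge3$). They intersect in $\{1\}$, and their union is $[m]$. Both classes $X_S$ and $X_T$ are nonempty by Proposition \ref{prop:classsize}, so choosing $x\in X_S$ and $y\in X_T$ gives $d(x,y)=3$.

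The only real point of care is confirming that the distance theorem is valid without the standing hypothesis $m\geq 3$. The other step needing attention is the choice of an explicit intersecting, covering pair of proper subsets. Such a pair exists exactly when $m\geq 3$, and this is what distinguishes the two cases.
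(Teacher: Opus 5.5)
Your proof is correct and follows the paper's argument. For $m=2$ you identify $G_2$ as complete bipartite on $X_{\{1\}}\cup X_{\{2\}}$, and for $m\geq 3$ you take the upper bound from Theorem~\ref{thm:distance} and realize distance $3$ with one intersecting pair of proper subsets whose union is $[m]$; you use $\{1,\dots,m-1\}$ and $\{1,m\}$ where the paper uses $[m]\setminus\{1\}$ and $[m]\setminus\{m\}$, and your check that $|X_{\{1\}}|=p_2-1\geq 2$ (ruling out $K_{1,1}$) is a detail the paper leaves implicit.
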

	
	\begin{proof}
		When $m=2$, the only nonempty proper subsets of $[2]$ are ${1}$ and ${2}$, so $G_2$ is the complete bipartite graph with bipartition $X_{{1}}\cup X_{{2}}$, and its diameter is $2$.
		
		For $m\geq 3$, Theorem \ref{thm:distance} shows that every pair of vertices is at distance at most $3$, so $\operatorname{diam}(G_2)\leq 3$. To see that the value $3$ occurs, choose
		$ S=[m]\setminus\{1\},$ and $ T=[m]\setminus\{m\}. $ Then $S$ and $T$ are nonempty proper subsets, $S\cap T\neq\emptyset$, and $S\cup T=[m]$. By Theorem \ref{thm:distance}, any $x\in X_S$ and $y\in X_T$ satisfy $d(x,y)=3$. Hence, $\operatorname{diam}(G_2)=3$.
	\end{proof}
	
	\medskip
	 The next theorem identifies the center of the graph for $m\geq 3$.	
	\begin{theorem}\label{thm:center}
		For $m\geq 3$, the radius of $G_2$ is $2$, and its center is exactly
		$ \mathcal C=\bigcup_{i=1}^m X_{{i}}. $
	\end{theorem}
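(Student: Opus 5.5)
Everything reduces to Theorem \ref{thm:distance}: for $x\in X_S$ the eccentricity is the largest value of $d(x,y)$ as $y$ ranges over all classes $X_T$, and every class is nonempty by Proposition \ref{prop:classsize}. The approach is therefore to compute $\operatorname{ecc}(x)$ purely in terms of $S$, showing that it equals $2$ when $|S|=1$ and $3$ when $|S|\geq 2$. Since no vertex can have eccentricity $1$ (no vertex is adjacent to its own class, which has more than one vertex or other nonadjacent classes exist), this gives radius $2$ and center $\bigcup_i X_{\{i\}}$.

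\textbf{Singleton classes.} Let $S=\{i\}$ and $T$ be any nonempty proper subset. The distance-$3$ case in Theorem \ref{thm:distance} requires $S\cap T\neq\emptyset$ and $S\cup T=[m]$. The first condition forces $i\in T$, and then $S\cup T=T\neq[m]$ because $T$ is proper. So no $T$ gives distance $3$, and $\operatorname{ecc}(x)\le 2$. To show equality, pick $T=\{i,j\}$ with $j\neq i$. This $T$ is proper because $m\ge 3$, it meets $S$, and $S\cup T=T\ne[m]$, so $d=2$. (Taking $y\in X_S$ itself also gives distance $2$ unless $|X_S|=1$, which is why I use $T=\{i,j\}$ instead.) Hence $\operatorname{ecc}(x)=2$.

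\textbf{Non-singleton classes.} Let $|S|\ge2$ and choose $a\in S$. Set $T=S^c\cup\{a\}$. This $T$ is nonempty. It is proper because $|S|\ge2$ means $S\setminus\{a\}\ne\emptyset$ is missing from $T$. We have $S\cap T=\{a\}\neq\emptyset$ and $S\cup T=[m]$, so Theorem \ref{thm:distance} gives $d(x,y)=3$ for every $y\in X_T$. Combined with $\operatorname{diam}=3$ from Corollary \ref{cor:diameter}, this gives $\operatorname{ecc}(x)=3$.

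\textbf{Conclusion and main obstacle.} The minimum eccentricity is therefore $2$, attained exactly on the singleton classes. The only delicate point is making sure the witness classes are genuinely nonempty proper subsets; this is where $m\ge3$ enters, to guarantee that $\{i,j\}$ is proper. No other obstacle is expected.
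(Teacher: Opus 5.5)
Your proof is correct and follows essentially the same route as the paper: both read eccentricities off Theorem \ref{thm:distance}, show singletons have eccentricity $2$, and show that any $|S|\ge 2$ admits a witness $T$ with $S\cap T\neq\emptyset$ and $S\cup T=[m]$. The differences are cosmetic. You take $T=S^c\cup\{a\}$ where the paper takes $T=[m]\setminus\{a\}$. Your witness $T=\{i,j\}$ for the lower bound on singleton eccentricity avoids the paper's tacit reliance on $|X_{\{i\}}|\ge 2$.
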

	
	\begin{proof}
		Let $x\in X_{{i}}$ for some $i\in [m]$. For any vertex $y\in X_T$, if $i\notin T$, then ${i}\cap T=\emptyset$, so $x$ and $y$ are adjacent by Theorem \ref{thm:adjdisjoint}. If $i\in T$, then ${i}\cap T\neq\emptyset$. As $T$ is proper and $m\geq 3$, we cannot have ${i}\cup T=[m]$, indeed, that would force $T=[m]$. Therefore the second case of Theorem \ref{thm:distance} applies, and $d(x,y)=2$. Hence every vertex in a singleton class has eccentricity at most $2$.
		
		Such a vertex is not adjacent to another vertex in the same class by Corollary \ref{cor:falsetwins}, so its eccentricity is exactly $2$. Thus every vertex in $\mathcal C$ is central and the radius is at most $2$. Since the graph is not complete, the radius cannot be $1$, so the radius is exactly $2$.
		
		Now let $x\in X_S$ with $|S|\geq 2$. Choose $a\in S$ and define
		$$
		T=[m]\setminus\{a\}.
		$$
		Because $|S|\geq 2$, the set $S\cap T=S\setminus\{a\}$ is nonempty, and clearly $S\cup T=[m]$. By Theorem \ref{thm:distance}, every $y\in X_T$ satisfies $d(x,y)=3$. Hence the eccentricity of $x$ is at least $3$, so $x$ is not central. Therefore the center is exactly $\mathcal C$.
	\end{proof}
	
	\medskip
	The next corollary gives a uniform routing bound after arbitrary deletions below the connectivity threshold.	
	\begin{corollary}\label{cor:deleteddiam}
		Let $W\subseteq V(G_2)$ satisfy
		$$
		|W|<\kappa(G_2)=\prod_{i=1}^{m-1}(p_i-1).
		$$
		Then $G_2-W$ is connected, every surviving vertex is at distance at most $2$ from some surviving vertex of $X_{{m}}$, and $ \operatorname{diam}(G_2-W)\leq 4. $
	\end{corollary}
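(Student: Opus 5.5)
Connectivity of $G_2-W$ is immediate from Theorem \ref{thm:mainconnectivity}, since $|W|<\kappa(G_2)$. The substance is the two metric claims. Both come from re-reading the proof of Theorem \ref{thm:mainconnectivity} with path lengths kept explicit.

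\textbf{Step 1: the anchor.} I would fix one surviving anchor $u\in X_{\{m\}}\setminus W$, which exists by Lemma \ref{lem:survivingu}. I claim every surviving vertex is within distance $2$ of this single $u$ in $G_2-W$.

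\textbf{Step 2: case analysis on $v\in V(G_2)\setminus W$.}
\begin{itemize}
\item If $v=u$, the distance is $0$.
\item If $m\notin Z(v)$, then $v\sim u$ by Lemma \ref{lem:directtou}. Both endpoints survive, so the edge survives in $G_2-W$ and $d_{G_2-W}(v,u)\le 1$.
\item If $m\in Z(v)$, Lemma \ref{lem:twostep} gives a surviving neighbour $w\in N(v)\setminus W$ with $m\notin Z(w)$. The path $v-w-u$ lies entirely in $G_2-W$, so $d_{G_2-W}(v,u)\le 2$.
\end{itemize}
This also covers the case $v\in X_{\{m\}}$, $v\neq u$: then $m\in Z(v)$, and the surviving neighbour $w$ has zero-set avoiding $m$. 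So the same $u$ works for every surviving vertex, and this gives the second claim.

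\textbf{Step 3: diameter.} For any two surviving vertices $v,v'$, concatenate their paths to the common anchor $u$. This gives a walk in $G_2-W$ of length at most $2+2=4$, hence $\operatorname{diam}(G_2-W)\le 4$. Connectivity also follows again, since every vertex reaches $u$.

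\textbf{Main obstacle.} The only delicate point is making sure one anchor serves all vertices. Lemma \ref{lem:twostep} already uses an arbitrary but fixed $u$, and the surviving neighbour $w$ exists because $\deg(v)\ge\delta(G_2)>|W|$ (Theorem \ref{thm:mindegree}). I would state this uniformity explicitly rather than re-choose $u$ per vertex. I would not try to sharpen the bound $4$, since the statement asks only for this routing bound.
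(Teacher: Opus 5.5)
Your proposal is correct and is essentially the paper's own argument. You fix one surviving anchor $u\in X_{\{m\}}\setminus W$ via Lemma \ref{lem:survivingu}, get distance at most $1$ or $2$ to $u$ from Lemma \ref{lem:directtou} and from the path $v-w-u$ built in the proof of Lemma \ref{lem:twostep}, and concatenate through $u$ to get $\operatorname{diam}(G_2-W)\le 4$; stating the case $v=u$ and the uniformity of the anchor explicitly just makes the paper's brief citation fully precise.
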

	
	\begin{proof}
		Connectivity follows from Theorem \ref{thm:mainconnectivity}. By Lemma \ref{lem:survivingu}, there exists a surviving vertex $u\in X_{{m}}\setminus W$. The proof of Theorem \ref{thm:mainconnectivity}, through Lemmas \ref{lem:directtou} and \ref{lem:twostep}, shows that every surviving vertex lies at distance at most $2$ from $u$ in $G_2-W$. Therefore, any two surviving vertices are at distance at most $4$ from one another.
	\end{proof}
	
	 Neither the exact distance formula nor the resulting diameter and center descriptions are explicit in the existing literature.  They emerge naturally from the support-set description and further demonstrate that the open problem on connectivity is part of a broader metric simplification available only in the squarefree model.
	
	\begin{example}\label{ex:dist210}
		Take $n=210$ and consider the support pairs listed in Table \ref{tab:distanceexamples}. The table illustrates all three cases of Theorem \ref{thm:distance}. In particular, the pair $ S=\{1,2\},$ and $ T=\{2,3,4\} $ has nonempty intersection and full union, so the corresponding vertices are at distance $3$.
	\end{example}
	
	\begin{table}[H]
		\centering
		\caption{Sample distances in $G_2$ for $n=210$.}
		\label{tab:distanceexamples}
		\begin{tabular}{cccc}
			\toprule
			$S$ & $T$ & Relation between $S$ and $T$ & Distance \\
			\midrule
			$\{1\}$ & $\{2\}$ & disjoint & $1$ \\
			$\{1,2\}$ & $\{4\}$ & disjoint & $1$ \\
			$\{1,2\}$ & $\{2,4\}$ & intersecting, union $\neq [4]$ & $2$ \\
			$\{1\}$ & $\{1,3\}$ & intersecting, union $\neq [4]$ & $2$ \\
			$\{1,2\}$ & $\{2,3,4\}$ & intersecting, union $=[4]$ & $3$ \\
			$\{1,3\}$ & $\{2,3,4\}$ & intersecting, union $=[4]$ & $3$ \\
			\bottomrule
		\end{tabular}
	\end{table}
	
	Figure \ref{fig:distancecases} shows the shortest-path patterns from Theorem \ref{thm:distance}. The support-set relation alone determines whether the geodesic length is $1$, $2$, or $3$.
	\begin{figure}[H]
		\centering
		\begin{tikzpicture}[
			box/.style={draw, rounded corners, minimum width=34mm, minimum height=9mm, align=center},
			arr/.style={-Latex, thick}
			]
			\node[box, fill=green!10] (c1) at (0,2.8) {Case 1:\\ $S\cap T=\emptyset$};
			\node[box, fill=green!10, right=18mm of c1] (c2) {Case 2:\\ $S\cap T\neq\emptyset$,\\ $S\cup T\neq [m]$};
			\node[box, fill=green!10, right=18mm of c2] (c3) {Case 3:\\ $S\cap T\neq\emptyset$,\\ $S\cup T=[m]$};
			
			\node[box, below=11mm of c1] (p1) {$x\longleftrightarrow y$};
			\node[box, below=11mm of c2] (p2) {$x\longleftrightarrow z\longleftrightarrow y$};
			\node[box, below=11mm of c3] (p3) {$x\longleftrightarrow u\longleftrightarrow v\longleftrightarrow y$};
			
			\draw[arr] (c1)--(p1);
			\draw[arr] (c2)--(p2);
			\draw[arr] (c3)--(p3);
		\end{tikzpicture}
		\vspace*{-2mm}
		\caption{Shortest-path patterns from Theorem \ref{thm:distance}.}
		\label{fig:distancecases}
	\end{figure}
	
	\section{Computational realization, algorithm, and comparative data}\label{sec:algorithmic}
	 The exact formula for $\kappa(G_2)$ immediately leads to an efficient computation procedure and reveals an unexpected arithmetic asymmetry, once the prime factors are ordered, the size of the minimum cut depends only on the first $m-1$ primes.
	
	\medskip
	The next proposition rewrites the exact formula of $\kappa(G_2)$ in Euler form.	
	\begin{proposition}\label{prop:eulerform}
		Let $n=p_1p_2\cdots p_m$ with distinct primes in increasing order. Then
		$$
		\kappa(G_2)=\frac{\phi(n)}{p_m-1}.
		$$
	\end{proposition}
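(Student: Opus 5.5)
This identity follows by rewriting Theorem \ref{thm:mainconnectivity} with the standard product formula for Euler's function, so no new graph-theoretic input is needed. First, I invoke Theorem \ref{thm:mainconnectivity}, which gives
$$
\kappa(G_2)=\prod_{i=1}^{m-1}(p_i-1).
$$

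Next, I compute $\phi(n)$ for the squarefree modulus $n=p_1p_2\cdots p_m$. Since the $p_i$ are distinct primes, they are pairwise coprime, so multiplicativity of $\phi$ together with $\phi(p)=p-1$ for a prime $p$ gives
$$
\phi(n)=\prod_{i=1}^{m}(p_i-1).
$$
Equivalently, via Theorem \ref{thm:CRT}, the units of $\F_{p_1}\times\cdots\times\F_{p_m}$ are exactly the tuples with all coordinates nonzero, and there are $\prod_{i=1}^m(p_i-1)$ of these.

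Finally, I divide by the last factor. This is legitimate because $p_m\geq 3$ when $m\geq 2$ (indeed $p_m>p_1\geq 2$), so $p_m-1\neq 0$. Then
$$
\frac{\phi(n)}{p_m-1}=\prod_{i=1}^{m-1}(p_i-1)=\kappa(G_2).
$$

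There is no real obstacle here. The only points to state explicitly are that the factors $p_i-1$ come from distinct, pairwise coprime primes, so the product formula for $\phi$ applies, and that the divisor $p_m-1$ is nonzero.
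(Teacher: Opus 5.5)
Your proof is correct and takes the same route as the paper. The paper gives no separate proof of this proposition; it already observes at the end of the proof of Theorem~\ref{thm:mainconnectivity} that $\phi(n)=\prod_{i=1}^{m}(p_i-1)$ turns $\prod_{i=1}^{m-1}(p_i-1)$ into $\phi(n)/(p_m-1)$, which is exactly your argument.
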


	\medskip
	The next corollary shows that changing only the largest prime does not change the connectivity value.	
	\begin{corollary}\label{cor:largestinvariance}
		Fix distinct primes $p_1<\cdots<p_{m-1}$ and let $q>p_{m-1}$ be any prime. If $n'=p_1p_2\cdots p_{m-1}q$, then
		$$
		\kappa\big(G_2(n')\big)=\prod_{i=1}^{m-1}(p_i-1).
		$$
		Hence, for fixed first $m-1$ primes, the size of the minimum cut is independent of the final prime.
	\end{corollary}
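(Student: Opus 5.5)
This corollary should follow by applying Theorem \ref{thm:mainconnectivity} directly to the new modulus $n'$. I will check that $n'$ meets the hypotheses of that theorem and then read off the formula.

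First, since $q>p_{m-1}$ and $q$ is prime, the primes $p_1<p_2<\cdots<p_{m-1}<q$ are distinct and in increasing order. Hence $n'=p_1\cdots p_{m-1}q$ is squarefree with $m$ distinct prime factors. In the ordered factorization of $n'$, the $i$-th prime is $p_i$ for $i\le m-1$, and the largest prime, in position $m$, is $q$.

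Next I apply Theorem \ref{thm:mainconnectivity} to $n'$. The connectivity is the product of $(p-1)$ over every prime factor of $n'$ except the largest. That product is $\prod_{i=1}^{m-1}(p_i-1)$, which is the claimed value. Equivalently, in the form of Proposition \ref{prop:eulerform},
$$
\kappa\big(G_2(n')\big)=\frac{\phi(n')}{q-1}=\frac{(q-1)\prod_{i=1}^{m-1}(p_i-1)}{q-1}=\prod_{i=1}^{m-1}(p_i-1).
$$

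For the invariance statement, the right-hand side does not involve $q$. So any two admissible choices of the final prime give the same value. By Lemma \ref{lem:separator}, this value is also the size of the minimum cut $X_{\{m\}}$ of $G_2(n')$, so the cut size is independent of $q$ as well.

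The only point that needs care is the ordering hypothesis $q>p_{m-1}$. Without it, $q$ would not be the largest prime, and Theorem \ref{thm:mainconnectivity} would instead omit the factor belonging to $p_{m-1}$. Once the ordering is confirmed, the argument is a direct substitution, so I expect no real obstacle.
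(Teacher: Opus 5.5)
Your proposal is correct and matches the paper's proof. Both apply Theorem~\ref{thm:mainconnectivity} to $n'$, using that its ordered prime list is $p_1,\dots,p_{m-1},q$, so the product omits only the factor belonging to $q$; your Euler-form check and your remark via Lemma~\ref{lem:separator} that $X_{\{m\}}$ realizes the cut are consistent additions, not a different route.
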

	
	\begin{proof}
		Apply Theorem \ref{thm:mainconnectivity} to $n'$. As the ordered prime list for $n'$ is exactly $p_1,\dots,p_{m-1},q$, the product defining $\kappa\big(G_2(n')\big)$ involves only the first $m-1$ primes.
	\end{proof}
	
	\medskip
	 The following algorithm computes $\kappa(G_2)$ directly from the ordered prime factors.	
	\begin{algorithm}[H]
		\caption{Computation of $\kappa(G_2)$ for squarefree $n=p_1p_2\cdots p_m$}
		\label{alg:kappa}
		\begin{algorithmic}[1]
			\Require Distinct primes $p_1<\cdots<p_m$
			\Ensure The value of $\kappa(G_2)$
			\If{$m=1$}
			\State \Return $0$
			\ElsIf{$m=2$}
			\State \Return $p_1-1$
			\Else
			\State $K\gets 1$
			\For{$i=1$ to $m-1$}
			\State $K\gets K\cdot (p_i-1)$
			\EndFor
			\State \Return $K$
			\EndIf
		\end{algorithmic}
	\end{algorithm}
	
	\medskip
	The following theorem validates Algorithm \ref{alg:kappa}.	
	\begin{theorem}\label{thm:algorithmcorrect}
		Algorithm \ref{alg:kappa} returns the exact value of $\kappa(G_2)$ for every squarefree modulus $n$ with ordered prime factors $p_1<\cdots<p_m$.
	\end{theorem}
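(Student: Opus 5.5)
The proof will be a case analysis on the branches of Algorithm \ref{alg:kappa}, comparing each returned value with the formula of Theorem \ref{thm:mainconnectivity}. I will also check separately any case that the theorem does not cover.

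\emph{Case $m\geq 3$.} The loop keeps the invariant $K=\prod_{i=1}^{k}(p_i-1)$ after iteration $k$. This holds trivially with $K=1$ before the loop, and each pass multiplies by the next factor. The loop therefore terminates with $K=\prod_{i=1}^{m-1}(p_i-1)$, which is $\kappa(G_2)$ by Theorem \ref{thm:mainconnectivity}.

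\emph{Case $m=2$.} The algorithm returns $p_1-1$, which is exactly the $m=2$ instance of Theorem \ref{thm:mainconnectivity}, since the hypothesis $m\geq 2$ is in force there. As a direct check, Corollary \ref{cor:diameter} shows that $G_2$ is the complete bipartite graph with parts $X_{\{1\}}$ and $X_{\{2\}}$. By Proposition \ref{prop:classsize} these parts have sizes $p_2-1$ and $p_1-1$. Since $p_1<p_2$, $\kappa(K_{a,b})=\min(a,b)=p_1-1$.

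\emph{Case $m=1$.} This is the step that needs genuine care, because the paper's standing assumption is $m\geq 2$. When $n=p$ is prime, every nonzero element is a unit. Equivalently, by Proposition \ref{prop:nonemptyproper}, there are no nonempty proper subsets of $[1]$, so $V(G_2)=\emptyset$. The main obstacle is to fix a convention under which returning $0$ is correct. I would adopt the usual convention that $\kappa$ of the empty (or trivial) graph is $0$, state it explicitly, and note that the theorem's claim is meant under this convention.

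Finally, for complexity I would note that the algorithm performs at most $m-1$ multiplications once the ordered primes are given. The proof itself will then consist of the three case checks, with the $m\geq 3$ loop invariant as the only nontrivial verification.
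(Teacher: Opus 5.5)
Your proposal is correct and follows the paper's proof: a branch-by-branch comparison with Theorem \ref{thm:mainconnectivity} for $m\geq 2$, together with the observation that $G_2$ is empty when $m=1$. The loop invariant, the independent check via $\kappa(K_{p_2-1,\,p_1-1})=p_1-1$ for $m=2$, and the explicit convention that $\kappa$ of the empty graph is $0$ are all sound, and they make precise steps the paper leaves implicit.
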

	
	\begin{proof}
		If $m=1$, then $n$ is prime, every nonzero element is a unit, and $G_2$ is empty. So, returning $0$ is correct. If $m=2$, Theorem \ref{thm:mainconnectivity} implies
		$ \kappa(G_2)=p_1-1, $ so the second branch is correct. If $m\geq 3$, Theorem \ref{thm:mainconnectivity} yields
		$$
		\kappa(G_2)=\prod_{i=1}^{m-1}(p_i-1),
		$$
		which is exactly the value accumulated in the loop. Hence, every branch returns the correct value.
	\end{proof}
	
	\medskip
	The next proposition records the computational cost of the algorithm after factorization is known.	
	\begin{proposition}\label{prop:complexity}
		Once the distinct prime factors of $n$ are known and ordered, Algorithm \ref{alg:kappa} computes $\kappa(G_2)$ using $m-1$ multiplications. In particular, its arithmetic complexity is $O(m)$.
	\end{proposition}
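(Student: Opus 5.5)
The statement to justify is Proposition \ref{prop:complexity}: once the primes are known and sorted, Algorithm \ref{alg:kappa} uses $m-1$ multiplications and $O(m)$ arithmetic operations. I will simply count operations branch by branch, using Theorem \ref{thm:algorithmcorrect} only to know that the returned value is $\kappa(G_2)$. In this accounting, one arithmetic operation means one multiplication, subtraction or comparison of integers of the size of the inputs. Factoring and sorting $n$ is explicitly excluded, since the statement assumes the ordered prime list is the input.

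The case split follows the algorithm.

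\textbf{Case $m=1$.} The algorithm returns $0$ after one comparison, with no multiplications. This is consistent with the count $m-1=0$.

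\textbf{Case $m=2$.} It returns $p_1-1$ after one subtraction. This involves $0$ multiplications, which is at most $m-1=1$. If an exact count of $m-1$ is wanted, I would note that the general loop also handles $m=2$: it computes $K=1\cdot(p_1-1)$ with exactly one multiplication. So the $m=2$ branch is just a shortcut of the loop.

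\textbf{Case $m\ge 3$.} The loop runs for $i=1,\dots,m-1$. Each iteration performs one subtraction $p_i-1$ and one multiplication $K\cdot(p_i-1)$. That gives exactly $m-1$ multiplications and $m-1$ subtractions. Adding the initialization and the constant number of branch tests, the total is $2(m-1)+O(1)=O(m)$.

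The only point needing care is wording rather than mathematics. The claim ``exactly $m-1$ multiplications'' is literally true only for the generic loop, since the $m\le 2$ branches use fewer. I would phrase the conclusion as ``at most $m-1$ multiplications, with equality when the loop is executed,'' which is all the $O(m)$ bound requires. Optionally, I would remark on bit complexity: the running product is at most $\phi(n)<n$, so each multiplication involves numbers of $O(\log n)$ bits, and the total bit cost is polynomial in $\log n$.
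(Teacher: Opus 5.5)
Your proof is correct and follows the paper's argument: the loop runs once for each of the first $m-1$ primes, with one multiplication per iteration, and the $m\le 2$ branches are constant-time. Your caveat that the $m=2$ branch performs no multiplication, so the count should read ``at most $m-1$, with equality when the loop is executed,'' is accurate and slightly more precise than the paper's own wording.
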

	
	\begin{proof}
		For $m\geq 3$, the loop executes exactly once for each of the first $m-1$ prime factors, and each iteration performs one multiplication. The cases $m=1$ and $m=2$ are constant-time branches. Therefore, the arithmetic cost is linear in the number of prime factors.
	\end{proof}
	
	\medskip
	The next corollary describes how the connectivity changes when a new largest prime factor is appended.	
	\begin{corollary}\label{cor:appendprime}
		Let $n=p_1p_2\cdots p_m$ with $m\geq 2$, and let $q>p_m$ be a prime. Then
		$$
		\kappa\big(G_2(nq)\big)=(p_m-1)\kappa\big(G_2(n)\big).
		$$
	\end{corollary}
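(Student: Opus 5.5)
The plan is to apply Theorem \ref{thm:mainconnectivity} twice, once to $n$ and once to $nq$, and compare the two products. The only point that needs attention is how the primes are ordered in each factorization.

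First, since $q>p_m$ is a prime not dividing $n$, the integer $nq=p_1p_2\cdots p_mq$ is again squarefree with $m+1\geq 3$ distinct prime factors. Its increasingly ordered prime list is exactly $p_1<p_2<\cdots<p_m<q$. So the largest prime factor of $nq$ is $q$, and the first $m$ primes of $nq$ are precisely $p_1,\dots,p_m$. This is the one place where the hypothesis $q>p_m$ is used: it guarantees that $q$, and not $p_m$, is the prime omitted from the product in Theorem \ref{thm:mainconnectivity}.

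Next, Theorem \ref{thm:mainconnectivity} applied to $nq$ gives
$$
\kappa\big(G_2(nq)\big)=\prod_{i=1}^{m}(p_i-1).
$$
Applied to $n$, which is legitimate since $m\geq 2$, it gives
$$
\kappa\big(G_2(n)\big)=\prod_{i=1}^{m-1}(p_i-1).
$$
Splitting off the last factor of the first product yields
$$
\kappa\big(G_2(nq)\big)=(p_m-1)\prod_{i=1}^{m-1}(p_i-1)=(p_m-1)\,\kappa\big(G_2(n)\big),
$$
which is the claim.

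There is no genuine obstacle here. The only care needed is in the bookkeeping of which prime is largest, and that is exactly what $q>p_m$ settles. An equivalent route would use Proposition \ref{prop:eulerform}: multiplicativity of $\phi$ gives $\phi(nq)=\phi(n)(q-1)$, so $\kappa(G_2(nq))=\phi(n)=(p_m-1)\cdot\phi(n)/(p_m-1)$.
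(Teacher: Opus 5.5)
Your proposal is correct and is essentially the paper's proof: apply Theorem \ref{thm:mainconnectivity} to $nq$, whose ordered prime list is $p_1<\cdots<p_m<q$, and to $n$, then split off the factor $p_m-1$. You also state explicitly that $q>p_m$ is what makes $q$ the omitted largest prime, which the paper leaves implicit, and your Euler-form variant $\kappa\big(G_2(nq)\big)=\phi(n)$ is a valid equivalent check.
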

	
	\begin{proof}
		Applying Theorem \ref{thm:mainconnectivity} to $nq$ gives the required identity
		$$
		\kappa\big(G_2(nq)\big)=\prod_{i=1}^{m}(p_i-1)=(p_m-1)\prod_{i=1}^{m-1}(p_i-1)=(p_m-1)\kappa\big(G_2(n)\big).
		$$
	\end{proof}
	
	 The earlier literature provides structural and spectral decompositions of $\Gamma(\mathbb Z_n)$, but not a direct computation rule for $\kappa(G_2)$. Theorem \ref{thm:algorithmcorrect} shows that once the squarefree factorization is available, the exact connectivity can be read off with linear complexity and no graph construction at all.
	
	\begin{example}\label{ex:algorithm}
		For $n=2310=2\cdot 3\cdot 5\cdot 7\cdot 11$, Algorithm \ref{alg:kappa} returns
		$$
		(2-1)(3-1)(5-1)(7-1)=48.
		$$
		The same value is obtained from Proposition \ref{prop:eulerform}, since
		$ \phi(2310)=480,$ so $\tfrac{\phi(2310)}{11-1}=48. $
		This agrees with Table \ref{tab:algorithmicdata}.
	\end{example}
	
	\begin{table}[H]
		\centering
		\caption{Comparative computational data for the exact formula.}
		\label{tab:algorithmicdata}
		\begin{tabular}{cccccc}
			\toprule
			$n$ & Ordered primes & $\phi(n)$ & $\tfrac{\phi(n)}{p_m-1}$ & Exact $\kappa(G_2)$ & Diameter \\
			\midrule
			$6$ & $(2,3)$ & $2$ & $1$ & $1$ & $2$ \\
			$30$ & $(2,3,5)$ & $8$ & $2$ & $2$ & $3$ \\
			$42$ & $(2,3,7)$ & $12$ & $2$ & $2$ & $3$ \\
			$70$ & $(2,5,7)$ & $24$ & $4$ & $4$ & $3$ \\
			$210$ & $(2,3,5,7)$ & $48$ & $8$ & $8$ & $3$ \\
			$2310$ & $(2,3,5,7,11)$ & $480$ & $48$ & $48$ & $3$ \\
			\bottomrule
		\end{tabular}
	\end{table}
	Figure \ref{fig:flowchart} shows the flowchart of Algorithm \ref{alg:kappa}. The graph-theoretic computation is reduced to a short arithmetic product once the squarefree prime factorization is available.
	\begin{figure}[H]
		\centering
		\begin{tikzpicture}[
			node distance=10mm and 18mm,
			startstop/.style={draw, rounded corners, fill=blue!10, minimum width=30mm, minimum height=8mm, align=center},
			process/.style={draw, rectangle, fill=green!10, minimum width=34mm, minimum height=8mm, align=center},
			decision/.style={draw, diamond, aspect=2.2, fill=yellow!10, align=center},
			arrow/.style={-Latex, thick}
			]
			\node[startstop] (start) {Input $p_1<\cdots<p_m$};
			\node[decision, below=of start] (d1) {$m=1$?};
			\node[decision, below=of d1] (d2) {$m=2$?};
			\node[process, right=24mm of d1] (r0) {Return $0$};
			\node[process, right=24mm of d2] (r1) {Return $p_1-1$};
			\node[process, below=of d2] (loop) {Compute $K=\prod_{i=1}^{m-1}(p_i-1)$};
			\node[startstop, below=of loop] (stop) {Return $K$};
			
			\draw[arrow] (start) -- (d1);
			\draw[arrow] (d1) -- node[left] {No} (d2);
			\draw[arrow] (d1) -- node[above] {Yes} (r0);
			\draw[arrow] (d2) -- node[above] {Yes} (r1);
			\draw[arrow] (d2) -- node[left] {No} (loop);
			\draw[arrow] (loop) -- (stop);
		\end{tikzpicture}
		\vspace*{-2mm}
		\caption{Flowchart for Algorithm \ref{alg:kappa}.}
		\label{fig:flowchart}
	\end{figure}
	
	\section{Conclusion and future work}\label{sec:conclusion}
	
	We have given a complete solution to Problem 7.2 of Banerjee \cite{Banerjee2022}. For the squarefree modulus $ n=p_1p_2\cdots p_m,$ with $2\leq p_1<\cdots<p_m, $ the induced subgraph $G_2$ of the comaximal graph $\Gamma(\mathbb Z_n)$ on the nonzero nonunits satisfies
	$$
	\kappa(G_2)=\prod_{i=1}^{m-1}(p_i-1)=\frac{\phi(n)}{p_m-1}.
	$$
	The proof is based on a support-set model obtained from the Chinese remainder theorem. In this model, vertices are indexed by nonempty proper subsets of $[m]$, adjacency becomes set disjointness, and $G_2$ is a weighted blow-up of a disjointness graph. This viewpoint also yields explicit class sizes, degree formulas, the unique minimum-degree layer, a transparent minimum separator, maximal connectivity, the exact edge connectivity, and a metric description including the diameter and center.
	
	The main limitation of the paper is that the argument is tailored to the squarefree case. Once repeated prime powers are allowed, coordinates no longer reduce to simple zero versus nonzero choices, and the disjointness model must be replaced by a more delicate multilevel description. That nonsquarefree situation seems genuinely harder and cannot be treated by a superficial modification of the present proof details of results. A second limitation is that we determined the exact value of the connectivity but did not classify all minimum vertex cuts. SO, we leave the following two problems
	
	\begin{problem}
		Classify all minimum vertex cuts of $G_2$ when $n=p_1p_2\cdots p_m$ is squarefree. Is every minimum separator forced, up to graph automorphism, to arise from the class $X_{{m}}$?
	\end{problem}
	
	\begin{problem}
		Extend Theorem \ref{thm:mainconnectivity} to general moduli $n$ with repeated prime powers. In particular, determine $\kappa(G_2)$ for $n=p_1^{\alpha_1}\cdots p_t^{\alpha_t}$ with at least one $\alpha_i>1$.
	\end{problem}
	
	\section*{Declarations}
	\noindent \textbf{Data Availability:} There is no data associated with this article.
	
	\noindent \textbf{Funding:} The authors did not receive support from any organization for the submitted work.
	
	\noindent \textbf{Conflict of interest:} The authors have no competing interests to declare that are relevant to the content of this article.
	
	\noindent\textbf{Note:} 
	I welcome any comments and suggestions regarding this article; please feel free to contact
	me at \texttt{\href{mailto:bilalahmadrr@gmail.com}{bilalahmadrr@gmail.com}}.

\end{document}